\numberwithin{equation}{section}
\newtheorem{theorem}{Theorem}[section]
\newtheorem{lemma}[theorem]{Lemma}
\newtheorem{defi}[theorem]{Definition}
\newtheorem{pr}[theorem]{Proposition}
\theoremstyle{definition}
\newcommand{\bel}{\begin{equation} \label}
\newcommand{\ee}{\end{equation}}
\newcommand{\one}{\mathds{1}}
\newcommand{\bx}{{\bf x}}
\newcommand{\rd}{{\mathbb R}^{2}}
\newcommand{\re}{{\mathbb R}}
\newcommand{\eps}{{\epsilon}}
\def\beq{\begin{equation}}
\def\eeq{\end{equation}}
\newcommand{\bea}{\begin{eqnarray}}
\newcommand{\eea}{\end{eqnarray}}
\newcommand{\beas}{\begin{eqnarray*}}
\newcommand{\eeas}{\end{eqnarray*}}
\begin{document}
\title[Eigenvalues for periodic twisting]{Spectral Asymptotics  for  Waveguides with Perturbed Periodic Twisting}

\author[G.~Raikov]{Georgi Raikov}

\begin{abstract}
We consider the twisted waveguide $\Omega_\theta$, i.e. the domain obtained by the rotation of the bounded cross section $\omega \subset \rd$ of the straight tube $\Omega : = \omega \times \re$ at angle $\theta$ which depends on the variable along the axis of $\Omega$. We study the spectral properties of the Dirichlet Laplacian in $\Omega_\theta$, unitarily equivalent under the diffeomorphism $\Omega_\theta \to \Omega$ to the operator $H_{\theta'}$, self-adjoint in ${\rm L}^2(\Omega)$.  We assume that $\theta' = \beta - \eps$ where $\beta$ is a $2\pi$-periodic function, and $\eps$ decays at infinity. Then in the spectrum $\sigma(H_\beta)$ of the unperturbed operator $H_\beta$ there is a semi-bounded gap $(-\infty, {\mathcal E}_0^+)$, and, possibly, a number of bounded gaps $({\mathcal E}_j^-, {\mathcal E}_j^+)$. Since $\eps$ decays at infinity, the essential spectra of $H_\beta$ and $H_{\beta - \eps}$ coincide. We investigate the asymptotic behaviour of the discrete spectrum of $H_{\beta - \eps}$ near an arbitrary fixed spectral edge ${\mathcal E}_j^\pm$. We establish necessary and quite close sufficient conditions which guarantee the finiteness of $\sigma_{\rm disc}(H_{\beta-\eps})$ in a neighbourhood of ${\mathcal E}_j^\pm$. In the case where the necessary conditions are violated, we obtain the main asymptotic term of the corresponding eigenvalue counting function. The effective Hamiltonian which governs the the asymptotics of $\sigma_{\rm disc}(H_{\beta-\eps})$ near ${\mathcal E}_j^\pm$ could be represented as a finite orthogonal sum of operators of the form
$$
-\mu\frac{d^2}{dx^2} - \eta \eps,
$$
self-adjoint in ${\rm L}^2(\re)$; here, $\mu > 0$ is a constant related to the so-called {\em effective mass}, while $\eta$ is $2\pi$-periodic function depending on $\beta$ and $\omega$.
\end{abstract}

\maketitle

{\bf  AMS 2010 Mathematics Subject Classification:} 35P20;   81Q10,
35J10\\

{\bf  Keywords:}
Waveguides, eigenvalue asymptotics, Floquet--Bloch theory, spectral gaps, Schr\"odinger operators with decaying oscillating potentials\\

\section{Introduction}
\label{s1} \setcounter{equation}{0}
Since the seminal work \cite{DE}, there has been an unfading interest towards the spectral properties of quantum waveguides, with an accent on the problem of existence of discrete eigenvalues. During the last decade the 3D {\em twisted waveguides} were investigated by numerous authors. Recently, a special attention has been allocated to the cases where the global twisting does not vanish, but has a non trivial asymptotic behaviour at infinity (see e.g. \cite{ExK, DeO, BP1, BKRS, BKR} and the references cited there).\\
In the present article we investigate
the asymptotic behaviour of the discrete spectrum near the edges of the essential one for
the Dirichlet Laplacian in a twisted waveguide with perturbed periodic twisting.\\
First, we describe the waveguides which we will deal with. Let $\omega \in \rd$ be a bounded domain. Introduce the straight tube  $\Omega : = \omega \times \re \subset \re^3$.
For ${\bf x} = (x_1,x_2,x_3) \in \Omega$, we write ${\bf x} = (x_t,x_3)$ with $x_t = (x_1,x_2) \in \omega$, and $x_3 \in \re$.
Assume that $\theta \in C^1(\re; \re)$, $\theta' \in {\rm L}^{\infty}(\re)$. Define the twisted tube
$$
\Omega_\theta = \{ r_\theta(x_3)\, \bx \in \re^3\, | \, \bx \in \Omega\}
$$
where
$$
 r_\theta(x_3) =
 \left(
 \begin{array}{ccc}
\cos \theta(x_3) & \sin \theta(x_3) & 0 \\
-\sin \theta(x_3) & \cos \theta(x_3) & 0 \\
0 & 0& 1
\end{array} \right).
$$
Then the Dirichlet Laplacian $-\Delta_{\Omega_\theta}^D$ is the self-adjoint operator generated in ${\rm L}^2(\Omega_\theta)$ by the closed quadratic form
$$
\tilde{\mathcal Q}_\theta[f]= \int_{\Omega_\theta} \vert \nabla f
(\bx)\vert^2 d \bx,\quad f\in {\rm D}(\tilde{\mathcal Q}_\theta)=
{\rm H}_0^1 (\Omega_\theta).
$$
Define the unitary operator ${\mathcal U}: {\rm L}^2( \Omega_\theta) \to {\rm L}^2(\Omega)$ by
$$
(\mathcal{U} f)(\bx) = f \left( r_\theta(x_3) \, \bx \right), \quad \bx \in \Omega, \quad
f \in {\rm L}^2( \Omega_\theta).
$$
Set
 $$
 \nabla_{t}:  =( \partial_{1},
\partial_{2})^T, \quad \Delta_t : = \partial_{1}^2 +
\partial_{2}^2, \quad \partial_{\varphi} := x_1
\partial_2 - x_2 \partial_1,
$$
and denote by $H_{\theta'}$ the self-adjoint operator generated
in ${\rm L}^2(\Omega)$ by the closed quadratic form
    \bel{q}
    {\mathcal Q}_{\theta'}[f] : = \tilde{\mathcal Q}_{\theta}[{\mathcal U}^{-1}f]
    = \int_{\Omega}
(\vert \nabla_{t} f \vert ^2 + \vert \theta'(x_3)
\partial_{\varphi} f  + \partial_3 f \vert^2 )\, d \bx, \quad
f \in {\rm  H}_0^1 (\Omega ).
 \ee
Then we have
$$
H_{\theta'} = {\mathcal U} \left(-\Delta_{\Omega_{\theta}}\right) {\mathcal U}^{-1}.
$$
Note that $H_{\theta'} \geq \lambda_1 I$ where $\lambda_1 > 0$ is the lowest eigenvalue of the cross-section Dirichlet Laplacian $-\Delta_t$, self-adjoint in  ${\rm L}^2(\omega$); hence,  $H_{\theta'}$ is boundedly invertible in ${\rm L}^2(\Omega)$. In \cite[Proposition 2.1]{BKR}, it was shown that if $\partial \omega \in C^2$, and $\theta \in C^2(\re)$ with $\theta', \theta'' \in {\rm L}^{\infty}(\re)$, then the domain $D(H_{\theta'})$ of $H_{\theta'}$ coincides with ${\rm H}^2(\Omega) \cap {\rm H}^1_0(\Omega)$, and
$$
H_{\theta'} = - \Delta_t - \left(\theta' \partial_\varphi + \partial_3\right)^2.
$$
In \cite{BKRS} we considered the spectral properties of $H_{\theta'}$ under the hypotheses  $\theta' = \beta - \epsilon$ where $\beta > 0$ is a constant, and $\epsilon \geq 0$ is a function which decays at infinity. Then, $H_\beta$ is unitarily equivalent under the partial Fourier transform with respect to $x_3$, to an analytically fibered operator, the spectrum $\sigma(H_\beta)$ of $H_\beta$ is purely absolutely continuous, and coincides with $[{\mathcal E}, \infty)$
(see \cite{ExK} or \cite[Subsection 2.2]{BKRS}). Since $\epsilon$ decays at infinity, the essential spectra $\sigma_{\rm ess}(H_\beta)$ and $\sigma_{\rm ess}(H_{\beta - \epsilon})$ coincide.
In \cite{BKRS} we established necessary and sufficient conditions on $\epsilon$ and the geometry of $\omega$ which guarantee the finiteness of the discrete spectrum of $H_{\beta-\epsilon}$  below ${\mathcal E}$. In the case where the necessary conditions are violated, we obtained the main asymptotic term of the infinite eigenvalue sequence which accumulates at ${\mathcal E}$ from below. \\
In the present article we undertake a related program in the case where $\theta' = \beta - \epsilon$ but now $\beta$ is a general $2\pi$-periodic function while $\epsilon$ decays at infinity as before. In this case the unperturbed operator $H_\beta$ is again unitarily equivalent under an appropriate Floquet--Bloch mapping to an analytically fibered operator (see below \eqref{x3}) but there are several substantial differences with respect to the case of constant $\beta$. First, apart from the unbounded gap $(-\infty, \inf \sigma(H_\beta))$ in the spectrum of $H_\beta$, there could  also exist bounded gaps. Thus, there could be several sequences of  discrete eigenvalues of $H_{\beta - \epsilon}$ which may accumulate from above (resp., from below) to a lower (resp., to an upper) edge of a gap in $\sigma(H_\beta)$. Moreover, the bounded gaps in $\sigma(H_\beta)$ are surrounded from both sides by regions of the essential spectrum which makes the investigation of the discrete spectrum of $H_{\beta-\eps}$ more difficult in comparison with the one lying below $\inf \sigma(H_\beta)$, taking into account in particular, that the perturbation $H_{\beta-\eps} - H_\beta$ is a second-order differential operator. Further, in \cite{BKRS} it was found that the effective Hamiltonian which models the asymptotic behaviour of the discrete spectrum of $H_{\beta - \epsilon}$ near the edges of the essential one, has the form
 \bel{x4}
-\mu \frac{d^2}{dx^2} - \eta \epsilon(x), \quad x \in \re,
 \ee
where $\mu > 0$ is a constant related to the so-called {\em effective mass} while $\eta \geq 0$ is another constant which depends explicitly on $\beta$ and the geometry of $\omega$.
If $\epsilon$ decays regularly enough at infinity, the asymptotic behaviour of the discrete spectrum of the operator \eqref{x4} is well known, and generically is of semiclassical nature (see e.g.
\cite[Theorem XIII.82]{RS4} for the generic case, and \cite{KS1} for the corrections to the semiclassical behaviour in the border-line case). In the present paper we find that the effective Hamiltonian which governs the asymptotics of the discrete spectrum of $H_{\beta-\epsilon}$ near a given edge of a gap in $\sigma(H_\beta)$,  can be written as a finite orthogonal sum of operators of the form
 \bel{x4a}
-\mu \frac{d^2}{dx^2} - \eta_{\rm per}(x) \epsilon(x), \quad x \in \re,
 \ee
 where $\mu > 0$ again is a constant related to the effective mass at the edge, but $\eta_{\rm per}$ is a periodic, generically non constant function which depends on $\beta$ and $\omega$. Note that even if $\epsilon$ decays regularly at infinity, the product $\eta_{\rm per}\, \epsilon$ has an irregular decay due to the oscillations of $\eta_{\rm per}$. Thus, the eigenvalue asymptotics for operators like \eqref{x4a} could be of independent interest. Multidimensional Schr\"odinger operators of this type have been considered in a different context in \cite{LNS, S}. \\
The article is organized as follows. In the next section we describe the spectral properties of the unperturbed operator $H_\beta$, necessary for the statement and the understanding of our main results, formulate these results, and briefly comment on them.
Their proofs can be found in Section \ref{s4}. Finally, in the Appendix we prove an auxiliary proposition concerning the spectral properties of an effective Hamiltonian of the form \eqref{x4a}.

\section{Main Results}
\label{s2} \setcounter{equation}{0}
\subsection{Spectral properties of the unperturbed operator $H_\beta$}
\label{ss21}
Assume that $\beta  \in C({\mathbb T};\re)$ where ${\mathbb T} : = \re/2\pi{\mathbb Z}$. Set ${\mathbb T}^* : = \re/{\mathbb Z}$. Define the unitary Floquet--Bloch operator
$\Phi: {\rm L}^2(\Omega) \to {\rm L}^2(\omega \times {\mathbb T} \times {\mathbb T}^*)$ by
$$
(\Phi u)(x_t,x_3,k) : = \sum_{\ell \in {\mathbb Z}} e^{-ik(x_3+ 2 \pi \ell)} u(x_t,x_3+ 2\pi \ell), \quad x_t \in \omega, \quad x_3 \in {\mathbb T}, \quad k \in {\mathbb T}^*,
$$
for, say, $u \in C(\overline{\omega}; {\mathcal S}(\re))$, where ${\mathcal S}(\re)$ denotes the Schwartz class on $\re$. Similar Floquet--Bloch operators have been used by numerous authors (see e.g. \cite{Y, BDE, F2, BP2, BP1}) within the context of the spectral analysis of periodic quantum waveguides.  We have
\bel{x3}
\Phi H_\beta \Phi^* = \int_{{\mathbb T}^*}^{\oplus} h_\beta(k) dk
\ee
where $h_\beta(k)$, $k \in {\mathbb T}^*$, is the self-adjoint operator generated in ${\rm L}^2(\omega \times {\mathbb T})$ by the closed quadratic form
$$
q[u; k] = \int_{\omega} \int_{\mathbb T}\left(|\nabla_t u|^2 + |(\beta \partial_\varphi + \partial_3 +ik)u|^2\right)dx_3 dx_t, \quad u \in {\rm H}_0^1(\omega \times {\mathbb T}).
$$
Note that
$$
q[u; k] \asymp \int_{\omega} \int_{\mathbb T}\left(|\nabla_t u|^2 + |(\partial_3 +ik)u|^2\right)dx_3 dx_t
$$
  uniformly with respect to $k \in {\mathbb T}^*$; here and in the sequel the notation $A \asymp B$ means that there exist constants $0 < c_1 \leq c_2 < \infty$ independent of $A$ and $B$, such that $c_1 A \leq B \leq c_2 A$. Evidently, the operator $h_\beta(k)$, $k \in {\mathbb T}^*$, is elliptic; since $\omega$ is  bounded, we find that the spectrum of $h_\beta(k)$ is discrete. Denote by $\left\{E_\ell(k)\right\}_{\ell \in {\mathbb N}}$ the non-decreasing sequence of the eigenvalues of $h_\beta(k)$, $k \in {\mathbb T}^*$. By the  Kato perturbation theory \cite{K}, {\em the band functions} $E_\ell$ are continuous piece-wise real analytic functions. We have
 $$
 \sigma(H_\beta) = \bigcup_{\ell \in {\mathbb N}} E_\ell({\mathbb T}^*).
 $$
 Let ${\mathcal E}_0^+ : = \inf \sigma(H_\beta) = \min_{k \in {\mathbb T}^*} E_1(k)$; evidently, ${\mathcal E}_0^+ > 0$. Then  in $\sigma(H_\beta)$ there always exists a semi-bounded gap\footnote{We call a {\em gap} in $\sigma(H_\beta)$ any open non-empty interval ${\mathcal I} \subset \re\setminus \sigma(H_\beta)$ such that $\partial {\mathcal I} \subset \sigma(H_\beta)$.} $(-\infty, {\mathcal E}_0^+)$. In contrast to the case of constant $\beta$, in $\sigma(H_\beta)$ with periodic non constant $\beta$ there could  also be bounded gaps (see e. g. \cite[Subsection 3.4]{DeO}, \cite[Subsection 4.4]{BP1}).
 Let $\left({\mathcal E}_j^-, {\mathcal E}_j^+\right)$, $j=1,\ldots,J \leq \infty$, be the disjoint bounded gaps in $\sigma(H_\beta)$; if there are no bounded gaps in $\sigma(H_\beta)$, we set $J=0$.    Then we have
    \bel{x6}
 \re\setminus\sigma(H_\beta) = \bigcup_{j=0}^J \left({\mathcal E}_j^-, {\mathcal E}_j^+\right)
    \ee
 with ${\mathcal E}_0^- : = -\infty$. Note that the value ${\mathcal E}_j^-$, $j \geq 1$, (resp.,  ${\mathcal E}_j^+$, $j \geq 0$) coincides with the maximal (resp., minimal) value of some band function $E_\ell$.
 \begin{defi} \label{d21}
 We will say that the boundary point ${\mathcal E}_j^{\pm}$ of  $\sigma(H_\beta)$ is regular if:\\
 {\rm (i)} There exists a unique band function $E_{\ell(j)}^{\pm}$ in the sequence $\left\{E_\ell\right\}_{\ell \in {\mathbb N}}$ which attains the value ${\mathcal E}_j^{\pm}$. \\
 {\rm (ii)} The function $E_{\ell(j)}^{\pm}$ attains the value ${\mathcal E}_j^{\pm}$ at finitely many points $k_{j,m}^{\pm}$, $m=1,\ldots, M^{\pm}_j$.\\
 {\rm (iii)} We have
   \bel{x25}
 \mu_{j,m}^{\pm} : = \pm \frac{1}{2} \frac{d^2E^{\pm}_{\ell(j)}}{dk^2}(k_{j,m}^{\pm}) > 0, \quad m=1,\ldots,M^{\pm}_j.
    \ee
  \end{defi}
  Note that if conditions (i) and (ii) in Definition \ref{d21} hold true, then the function $E^{\pm}_{\ell(j)}$ is analytic in a vicinity  of each point $k_{j,m}^{\pm}$, $m=1,\ldots,M_j^{\pm}$. More precisely, there exists a $\delta > 0$ such that the intervals
  \bel{x9}
  {\mathcal I}^{\pm}_{j,m} = \left(k_{j,m}^{\pm}-\delta, k_{j,m}^{\pm}+\delta\right), \quad m=1,\ldots,M_j^{\pm},
  \ee
  are disjoint, and the function $E_{\ell(j)}^{\pm}$ is real-analytic on their closures. Set
  \bel{x10}
  {\mathcal I}_j^{\pm} = \bigcup_{m=1}^{M_j^{\pm}} {\mathcal I}_{j,m}^{\pm},
  \ee
  and introduce the eigenfunctions $\psi_j^{\pm}(\bx; k)$, $\bx = (x_t,x_3) \in \omega \times {\mathbb T}$, $k \in {\mathcal I}_j^{\pm}$, such that
  \bel{x11}
  h_\beta(k) \psi_j^{\pm}(\cdot ; k) = E^\pm_{\ell(j)} \psi_j^{\pm}(\cdot ; k), \quad \int_{\omega} \int_{\mathbb T} \left|\psi_j^{\pm}(x_t, x_3 ; k)\right|^2 dx_3 dx_t =1, \quad k \in   {\mathcal I}_j^{\pm},
  \ee
  and the mappings $\overline{ {\mathcal I}_j^{\pm}} \ni k \mapsto \psi_j^{\pm}(\cdot ; k) \in D(H_\beta)$ are analytic. \\
 The following proposition shows that the set of regular edges of $\sigma(H_\beta)$ is not empty.
 \begin{pr} \label{p22}
 Assume that $\partial \omega \in C^\infty$ and $\beta \in C^\infty({\mathbb T})$. Then $E_1(0) = {\mathcal E}_0^+  = \min_{k \in {\mathbb T}^*} E_1(k)$, and we have
 $E_1(k) > E_1(0)$, for $k \in  {\mathbb T}^*$, $ k \neq 0$, as well as $E_j(0) > E_1(0)$ for $j\geq 2$. Moreover, $E''_1(0) > 0$.
 \end{pr}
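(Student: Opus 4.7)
The plan is to prove the four assertions of the proposition in sequence. Set $Y := \beta\partial_\varphi + \partial_3$, so that $h_\beta(k) = -\Delta_t + (Y+ik)^*(Y+ik) = h_\beta(0) + 2kB + k^2$ with $B := -iY$ self-adjoint. The operator $h_\beta(0) = -\Delta_t - Y^2$ is elliptic on $\omega\times{\mathbb T}$ with real smooth coefficients, Dirichlet on $\partial\omega\times{\mathbb T}$ and periodic in $x_3$; the strong parabolic maximum principle makes $e^{-th_\beta(0)}$ positivity improving, so by a Perron--Frobenius argument $E_1(0)$ is simple with a strictly positive eigenfunction $\psi_0$. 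This already yields $E_j(0) > E_1(0)$ for $j \geq 2$.

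For any $\psi = |\psi|e^{i\theta} \in {\rm H}_0^1(\omega\times{\mathbb T})$, the pointwise a.e.\ identity
\[
|\nabla_t\psi|^2 + |(Y+ik)\psi|^2 = |\nabla_t|\psi||^2 + |Y|\psi||^2 + |\psi|^2\bigl(|\nabla_t\theta|^2 + (Y\theta + k)^2\bigr)
\]
has two non-negative excess terms intrinsic to $\psi$ (e.g.\ $|\psi|^2|\nabla_t\theta|^2 = |\nabla_t\psi|^2 - |\nabla_t|\psi||^2$). Hence $q[\psi;k] \geq q[|\psi|;0] \geq E_1(0)\|\psi\|^2$ and $E_1(k) \geq E_1(0)$ by min--max. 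If equality holds at some $k \in {\mathbb T}^*$, applied to the ground state $\psi_k$ of $h_\beta(k)$ this forces $|\psi_k|$ proportional to $\psi_0$, $\nabla_t \theta_k = 0$, and $Y\theta_k = -k$ on $\{\psi_0 > 0\}$, hence $\theta_k(x_3) = -kx_3 + c$. For $e^{i\theta_k}$ to descend to ${\mathbb T} = \re/2\pi{\mathbb Z}$ we need $k \in {\mathbb Z}$, i.e.\ $k \equiv 0$ in ${\mathbb T}^* = \re/{\mathbb Z}$. Thus $E_1(k) > E_1(0)$ for $k \neq 0$.

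By Kato's analytic perturbation theory and the simplicity of $E_1(0)$, both $E_1$ and a normalised family of ground states $\psi_k$ are real analytic near $0$. Time reversal $\overline{h_\beta(k) u} = h_\beta(-k) \bar u$ gives $E_1(k) = E_1(-k)$, so $E_1'(0) = 0$; combined with the previous paragraph, $E_1''(0) \geq 0$. First-order perturbation produces $\psi_k = \psi_0 + 2ik\chi + O(k^2)$ with $\chi := (h_\beta(0) - E_1(0))^{-1}_\perp Y\psi_0$ real and orthogonal to $\psi_0$, so $\theta_k = 2k\,\chi/\psi_0 + O(k^2)$ on $\{\psi_0 > 0\}$. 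Inserting $\psi = \psi_k$ into the diamagnetic identity, dividing by $k^2$, and letting $k \to 0$ yields
\[
\frac{1}{2}E_1''(0) \geq \int_{\omega\times{\mathbb T}} \psi_0^2 \bigl(4|\nabla_t(\chi/\psi_0)|^2 + (2Y(\chi/\psi_0) + 1)^2\bigr).
\]
If $E_1''(0) = 0$, the integrand vanishes, forcing $\chi/\psi_0 = -x_3/2 + c$, which contradicts the $2\pi$-periodicity of $\chi/\psi_0$ in $x_3$. Hence $E_1''(0) > 0$.

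The first two paragraphs reproduce classical arguments (Perron--Frobenius and the diamagnetic inequality adapted to the periodic twisting vector field $Y$); the main obstacle is the last paragraph. The delicate point is to rigorously justify the limit $\theta_k/k \to 2\chi/\psi_0$ and the convergence of the phase integrals near $\partial\omega$, where both $\psi_0$ and $\chi$ vanish. In practice one avoids introducing $\theta_k$ explicitly, using the intrinsic identities $|\psi_k|^2|\nabla_t\theta_k|^2 = |\nabla_t\psi_k|^2 - |\nabla_t|\psi_k||^2$ (and the analogue for $Y$), combined with the $C^\infty$ regularity of $\psi_k$ up to the boundary afforded by the smoothness hypotheses on $\partial\omega$ and $\beta$.
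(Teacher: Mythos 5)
Your proof is essentially correct but takes a genuinely different route from the paper's. After the same first step (simplicity and positivity of the ground state $\psi_0$ of $h_\beta(0)$, which the paper gets from strong ellipticity), the paper substitutes $u=\psi_0 v$ into the Rayleigh quotient to obtain the ground-state representation \eqref{y1}, absorbs the twisting term by Cauchy--Schwarz at the cost of a factor $(1+c)^{-1}$, replaces the weight $\psi_0^2$ by ${\rm dist}(x_t,\partial\omega)^2$ using the two-sided bound \eqref{y4}, and finishes with a one-dimensional Fourier computation yielding the single quantitative estimate $E_1(k)-E_1(0)\geq Ck^2$; by analyticity at $k=0$ this delivers both the strict inequality for $k\neq0$ and $E_1''(0)>0$ in one stroke, with no polar decomposition and no eigenfunction perturbation theory. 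Your route splits the two conclusions: the diamagnetic identity plus the phase-winding argument ($\nabla_t\theta_k=0$, $\partial_3\theta_k=-k$, periodicity forcing $k\in{\mathbb Z}$) gives strictness qualitatively, and Kato theory plus a variational identity gives the effective mass. Both halves are sound; in fact your inequality for $\frac{1}{2}E_1''(0)$ is an identity, since Kato's formula gives $\frac{1}{2}E_1''(0)=1-4\langle\chi,Y\psi_0\rangle$ while the ground-state substitution applied to $\chi=\psi_0\cdot(\chi/\psi_0)$ converts $\langle\chi,Y\psi_0\rangle$ into $\int\psi_0^2\bigl(|\nabla_t(\chi/\psi_0)|^2+(Y(\chi/\psi_0))^2\bigr)$. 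The one incomplete step is the one you flag yourself: controlling $\chi/\psi_0$ (equivalently $\theta_k/k$) near $\partial\omega$, where numerator and denominator both vanish. Closing it requires precisely the ingredient the paper establishes as \eqref{y4}, namely $c_1\,{\rm dist}(x_t,\partial\omega)\leq\psi_0\leq c_2\,{\rm dist}(x_t,\partial\omega)$ (Davies--Simon from below, boundary regularity from above), which guarantees that $\chi/\psi_0$ lies in the relevant weighted form domain. In short, the paper's argument is shorter and already contains the boundary control your last paragraph needs; yours is more transparent about why the minimum sits at $k=0$ (phase winding) and exhibits the effective mass as a manifestly non-negative variational quantity, at the price of a separate boundary analysis.
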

 \begin{proof}
 The operator $h_\beta(0) = -\Delta_t - (\beta\partial_\varphi +\partial_3)^2$ is a strongly elliptic operator on $\omega \times {\mathbb T}$ with smooth real coefficients. Hence, its first eigenvalue is simple, i.e. $E_j(0) > E_1(0)$, $j \geq 2$. Moreover, we could choose the normalized first eigenfunction $\psi  \in C^{\infty}(\overline{\omega} \times {\mathbb T})$ of $h_\beta(0)$ to be  positive on $\omega \times {\mathbb T}$. The mini-max principle yields
 $$
 E_1(k) = \inf_{0 \neq u \in C^\infty({\mathbb T}; C_0^\infty(\omega))}\frac{\int_\omega \int_{\mathbb T}\left(|\nabla_t u|^2 + |(\beta \partial_\varphi + \partial_3 + ik)u|^2\right)dx_3 dx_t}{\int_\omega \int_{\mathbb T}|u|^2dx_3 dx_t}, \quad k \in {\mathbb T}^*.
 $$
 Changing the functional variable $u = \psi v$, and integrating by parts, we obtain
 $$
 E_1(k) - E_1(0) =
 $$
 \bel{y1}
 \inf_{0 \neq v \in C^\infty({\mathbb T}; C_0^\infty(\omega))}\frac{\int_\omega \int_{\mathbb T}\psi^2 \left(|\nabla_t v|^2 + |(\beta \partial_\varphi + \partial_3 + ik)v|^2\right)dx_3 dx_t}{\int_\omega \int_{\mathbb T}\psi^2 |v|^2dx_3 dx_t}, \quad k \in {\mathbb T}^*.
 \ee
 Further, for any $\varepsilon \in (0,1)$ we have
    $$
    \int_\omega \int_{\mathbb T}\psi^2 \left(|\nabla_t v|^2 + |(\beta \partial_\varphi + \partial_3 + ik)v|^2\right)dx_3 dx_t \geq
    $$
    \bel{y2}
    \int_\omega \int_{\mathbb T}\psi^2 \left((1-(\varepsilon^{-1}-1)c)|\nabla_t v|^2 + (1-\varepsilon) |\partial_3 v + ik v|^2\right)dx_3 dx_t
    \ee
    where $c : = \max_{x_3 \in {\mathbb T}} \beta(x_3)^2 \sup_{x_t \in \omega} |x_t|^2$. Now, \eqref{y1} and \eqref{y2} yield
    \bel{y3}
 E_1(k) - E_1(0) \geq
  (1+c)^{-1} \inf_{0 \neq v \in C^\infty({\mathbb T}; C_0^\infty(\omega))}\frac{\int_\omega \int_{\mathbb T} \psi^2 |\partial_3 v + ik v|^2dx_3 dx_t}{\int_\omega \int_{\mathbb T}\psi^2 |v|^2 dx_3 dx_t}, \quad k \in {\mathbb T}^*.
 \ee
 Let $\Psi(x_t) : = {\rm dist}\,(x_t,\partial \omega)$, $x_t \in \omega$.  Then there exist constants $0 < c_1 \leq c_2 < \infty$ such that
 \bel{y4}
 c_1 \Psi(x_t) \leq \psi(x_t,x_3) \leq c_2 \Psi(x_t), \quad x_t \in \omega, \quad x_3 \in {\mathbb T}.
 \ee
 The lower bound in \eqref{y4} can be obtained arguing as in the proof of \cite[Theorem 7.1]{DS}. The upper bound follows from the facts that
 $\psi  \in C^{\infty}(\overline{\omega} \times {\mathbb T})$, $\psi_{|\partial \omega \times {\mathbb T}} = 0$, and in a vicinity of $\partial \omega$ there exist smooth coordinates in $\overline{\omega}$ such that the variable normal to $\partial \omega$ is proportional to $\Psi(x_t)$.
 Expanding $w \in C^\infty({\mathbb T})$ in a Fourier series, we easily find that
 $$
 \inf_{0 \neq w \in C^\infty({\mathbb T})}\frac{ \int_{\mathbb T} |w' + ik w|^2 dx}{\int_{\mathbb T} |w|^2 dx} = k^2, \quad k \in {\mathbb T}^*.
 $$
 Therefore,
 \bel{y5}
 \int_{\mathbb T} |\partial_3 v(x_t, x_3) + ik v(x_t, x_3)|^2 dx_3 \geq  k^2 \, \int_{\mathbb T} |v(x_t, x_3)|^2 dx_3
 \ee
 for any $x_t \in \omega$ and $v \in C^\infty({\mathbb T}; C_0^\infty(\omega))$. Multiplying \eqref{y5} by $\Psi(x_t)^2$ and integrating with respect to $x_t \in \omega$, bearing in mind \eqref{y4}, we obtain the estimate
    \bel{y6}
    \inf_{0 \neq v \in C^\infty({\mathbb T}; C_0^\infty(\omega))}\frac{\int_\omega \int_{\mathbb T} \psi^2 |\partial_3 v + ik v|^2dx_3 dx_t}{\int_\omega \int_{\mathbb T}\psi^2 |v|^2 dx_3 dx_t} \geq \frac{c_1^2}{c_2^2} k^2, \quad k \in {\mathbb T}^*.
    \ee
    Now \eqref{y3} and \eqref{y6} yield
    \bel{y7}
    E_1(k) - E_1(0) \geq  \frac{c_1^2}{(1+c)c_2^2} \, k^2, \quad k \in {\mathbb T}^*.
    \ee
    In particular we have, $E_1(k) > E_1(0)$ for $0 \neq k \in {\mathbb T}^*$. Since $E_1$ is analytic in a neighbourhood of $k=0$, we find that \eqref{y7} also  implies $E_1'(0) = 0$, $E_1''(0) > 0$.
    \end{proof}
    {\em Remark}: The assumptions $\partial \omega \in C^{\infty}$ and $\beta \in C^{\infty}({\mathbb T})$ of Proposition \ref{p22} are too restrictive; we impose them for the sake of simplicity of the proof.\\

 Let us now comment on the validity in general of conditions (i) -- (iii) in Definition \ref{d21}. It is well known that in the case of 1D Schr\"odinger operators with $2\pi$-periodic  potentials (Hill operators), the analogue of condition (i) is always fulfilled (see e.g. \cite[Theorem XIII.89]{RS4}). The results of \cite{KR} imply that generically this is also the case for multidimensional Schr\"odinger operators with periodic  potentials. It is quite likely that the methods of \cite{KR} could be successfully applied in order to show that condition (i) in Definition \ref{d21} is generically valid.\\
 Further, condition (ii) would immediately follow from condition (i) if we know that the band function $E_{\ell(j)}^{\pm}$ is not constant on any interval
 of positive length. On the other hand, the non constancy of $E_{\ell(j)}^{\pm}$ would follow from the absolute continuity of $\sigma(H_\beta)$, which however has not been proven yet in maximal generality. Probably, the most general results concerning the absolute continuity of the spectrum for periodic quantum waveguides, are contained in \cite{F2}; reduced to the special case of $H_\beta$, these results imply that $\sigma(H_\beta)$ is purely absolutely continuous under the (technical) assumption that $\beta$ is an {\em odd} sufficiently regular periodic function of $x_3$. Essentially less general result could be found in \cite{BDE} where it is shown that for each $E>0$ there exists $\varepsilon > 0$ such that $0 \in \omega$  and  ${\rm diam}\,\omega < \varepsilon$ imply that $\sigma(H_\beta)$  on $(-\infty,E)$ is purely absolutely continuous.\\
 Let us comment briefly on the possible number of points $M_j^{\pm}$ at which the band function $E_{\ell(j)}^{\pm}$ attains its extremal value ${\mathcal E}_j^{\pm}$. It is well known that in the case of the Hill operator, the band functions $E_{2j-1}$ (resp., $E_{2j}$) attain their minimal value at $k=0$ and their  maximal value at $k=1/2$ (resp., their minimal value at $k=1/2$ and their  maximal value at $k=0$). This phenomenon is related, in particular, to the transformation properties of the fiber operator under complex conjugation. Our fiber operator $h_\beta(k)$ is also anti-unitarily equivalent to $h_\beta(-k)$ under complex conjugation. Hence, $E_{\ell(j)}^{\pm}$ could attain the value ${\mathcal E}_j^{\pm}$ at a {\em single} point $k \in {\mathbb T}^*$ only if $k=0$ or $k=1/2$; if $E_{\ell(j)}^{\pm}(k) = {\mathcal E}_j^{\pm}$ at some $k \in (0,1/2)$, then $E_{\ell(j)}^{\pm}(-k) = {\mathcal E}_j^{\pm}$ as well, and in this case $k$ and $-k$ are distinct points of ${\mathbb T}^*$.  In principle, our band functions $E_j$ could attain their minimal and maximal values at several points of the dual torus ${\mathbb T}^*$ (see \cite{BP2} for an example concerning a particular 2D periodic waveguide, as well as \cite[Example 4.4]{BP1} concerning a 3D waveguide with weak periodic twisting), which is reflected  in condition (ii) of Definition \ref{d21}.  \\
 Finally, the analogue of condition (iii) in Definition \ref{d21} for Hill operators is always fulfilled (see e.g. the proof of \cite[Theorem XIII.89 (e)]{RS4}). In the case of multidimensional Schr\"odinger operators with periodic electric potentials, the analogue of this condition is known to hold true at the infimum of the spectrum (see \cite{KS}) but, as far as the author is informed, there is no general proof that it holds at the edges of eventual {\em bounded} gaps in the spectrum. Note that in our case conditions (i) and (ii) imply that for each $k_{j,m}^{\pm}$ there exists $q \in {\mathbb N}$ such that the derivatives of $E_{\ell(j)}^{\pm}$ at $k_{j,m}^{\pm}$ of order $1, \ldots, 2q-1$, vanish, but the derivative of order $2q$ does not. The proofs of our main results could be easily extended
 to the case of degenerate extrema, i.e. the case $q>1$; we do not include these quite straightforward but tedious extensions just because we do not dispose of examples that such degenerate extrema could in fact occur.

 \subsection{Statement of main results}
\label{ss22}
 Let $T$ be a self-adjoint operator in a Hilbert space, and ${\mathcal I} \subset \re$ be an interval. Set
 \bel{24}
 N_{\mathcal I}(T) : = {\rm rank}\,\one_{\mathcal I}(T)
 \ee
 where $\one_{\mathcal I}(T)$ is the spectral projection of  $T$ corresponding to  ${\mathcal I}$. Thus, if
  ${\mathcal I} \cap \sigma_{\rm ess}(T) = \emptyset$, then $N_{\mathcal I}(T)$ is just the number of the (discrete) eigenvalues of $T$, lying on the interval ${\mathcal I}$, and counted with their multiplicities. \\
  Assume that $\beta \in C({\mathbb T};\re)$, $\epsilon \in C(\re; \re) \cap {\rm L}^{\infty}(\re)$, $\lim_{|x| \to \infty} \epsilon(x) = 0$. Then the resolvent difference $H_\beta^{-1} - H_{\beta-\eps}^{-1}$ is a compact operator, and hence $\sigma_{\rm ess}(H_\beta) = \sigma_{\rm ess}(H_{\beta - \epsilon})$. Therefore, \eqref{x6} implies $$
  \re\setminus\sigma_{\rm ess}(H_{\beta-\eps}) = \bigcup_{j=0}^J \left({\mathcal E}_j^-, {\mathcal E}_j^+\right).
  $$
  Put
  $$
  {\mathcal N}_0^+(\lambda) = N_{(-\infty, {\mathcal E}_0^+ - \lambda)}(H_{\beta-\eps}), \quad \lambda > 0.
  $$
  Fix ${\mathcal E} \in  \left({\mathcal E}_j^-, {\mathcal E}_j^+\right)$, $j\geq 1$, and set
  $$
  {\mathcal N}_j^-(\lambda) = N_{({\mathcal E}_j^- + \lambda, {\mathcal E})}(H_{\beta-\eps}), \quad \lambda \in (0, {\mathcal E}-{\mathcal E}_j^-),
  $$
  $$
  {\mathcal N}_j^+(\lambda) = N_{({\mathcal E}, {\mathcal E}_j^+ - \lambda)}(H_{\beta-\eps}), \quad \lambda \in (0, {\mathcal E}_j^+-{\mathcal E}).
  $$
  Assume that the edge point ${\mathcal E}_j^{\pm}$ is regular (see Definition \ref{d21}). For $x_3 \in {\mathbb T}$ and $m =1,\ldots,M_j^{\pm}$,  introduce the functions
  \bel{x1}
  \eta_{j,m}^{\pm}(x_3) : =
  2{\rm Re}\,\int_\omega \overline{\partial_\varphi\psi_j^{\pm}(x_t, x_3; k_{j,m}^{\pm})}
  \left(\beta(x_3)\partial_\varphi + \partial_3 + ik_{j,m}^{\pm}\right) \psi_j^{\pm}(x_t, x_3; k_{j,m}^{\pm}) dx_t,
  \ee
  and their mean values
  $$
  \langle  \eta_{j,m}^{\pm} \rangle : = \frac{1}{2\pi} \int_{\mathbb T}  \eta_{j,m}^{\pm}(x) dx.
  $$
  For $n \in {\mathbb Z}_+$ and $\alpha > 0$   set
    \bel{x8}
  {\mathcal S}_{n,\alpha}(\re) : = \left\{u \in C^n(\re ; \re) \, | \, |u^{(\ell)}(x)| \leq c_\ell (1+|x|)^{-\alpha - \ell}, \; x \in \re, \; \ell=0,\ldots,n\right\}.
    \ee
  Denote by ${\mathcal S}^+_{n,\alpha}(\re)$ the class of functions $u \in {\mathcal S}_{n,\alpha}(\re)$ for which there exist constants $C>0$ and $R>0$ such that $u(x) \geq C|x|^{-\alpha}$ for $|x| \geq R$.
  Now we are in position to formulate our main result.
  \begin{theorem} \label{th1}
  Let $\beta \in C^4({\mathbb T})$, and $\left({\mathcal E}_j^-, {\mathcal E}_j^+\right)$, $j \geq 0$, be a gap in $\sigma(H_\beta)$. Assume that the edge point ${\mathcal E}_j^{\pm}$ is regular.\\
  {\rm (i)} Let $\alpha \in (0,2)$, $\eps \in {\mathcal S}_{4,\alpha}^+(\re)$. Assume  that there exists at least one $m=1,\ldots,M_j^{\pm}$, such that $\pm \langle \eta_{j,m}^{\pm} \rangle > 0$. Then we have
  $$
  {\mathcal N}^{\pm}_j(\lambda) =
  \frac{1}{2\pi} \sum_{m=1}^{M_j^{\pm}} \left|\left\{(x,k) \in T^*\re \, |\, \mu_{j,m}^{\pm}k^2 \mp 2\pi \langle \eta^{\pm}_{j,m} \rangle \eps(x) < -\lambda \right\}\right|(1+o(1)) =
  $$
  \bel{21}
  \frac{1}{\pi} \sum_{m=1}^{M_j^{\pm}} \left(\mu_{j,m}^{\pm}\right)^{-1/2} \int_\re \left(\pm 2\pi \langle \eta^{\pm}_{j,m} \rangle \eps(x) - \lambda\right)_+^{1/2}dx \, (1+o(1)) \asymp \lambda^{\frac{1}{2} - \frac{1}{\alpha}}, \quad \lambda \downarrow 0,
  \ee
   where $|\cdot |$ denotes the Lebesgue measure. In particular, the fact that ${\mathcal N}^{\pm}_j(\lambda)$ grows unboundedly as $\lambda \downarrow 0$ implies that there exists a sequence of discrete eigenvalues of the operator $H_{\beta - \eps}$ which accumulates at ${\mathcal E}_j^{\pm}$.
     If, on the contrary, we have $\pm \langle \eta_{j,m}^{\pm} \rangle < 0$ for all $m=1,\ldots,M_j^{\pm}$, then
  \bel{22}
  {\mathcal N}^\pm_j(\lambda) = O(1), \quad \lambda \downarrow 0,
  \ee
  i.e. the discrete spectrum of $H_{\beta - \eps}$ does not accumulate at ${\mathcal E}_j^{\pm}$.\\
  {\rm (ii)} Let $\alpha \in (0,2)$, $\eps \in {\mathcal S}_{4,\alpha}(\re)$. Assume that $\pm \langle \eta_{j,m}^{\pm} \rangle \leq  0$ for all $m$, and $\langle \eta_{j,m}^{\pm} \rangle =  0$ for some $m=1,\ldots,M_j^{\pm}$.
  Then for each $\kappa >0$ we have
  \bel{x2}
  {\mathcal N}^\pm_j(\lambda) = O(\lambda^{\frac{1}{2}-\frac{1}{2\alpha} -\kappa}), \quad \lambda \downarrow 0,
  \ee
  if $\alpha \in (0,1]$, while \eqref{22} holds true if $\alpha \in (1,2)$. \\
  {\rm (iii)} Let $\alpha = 2$, $\eps  \in {\mathcal S}_{4,2}(\re)$. Suppose moreover that there exists a finite limit
 $L : = \lim_{|x| \to \infty}x^2 \eps(x)$. Then we have
 $$
 \lim_{\lambda \downarrow 0} |\ln{\lambda}|^{-1} {\mathcal N}_j^{\pm}(\lambda) = \frac{1}{\pi} \sum_{m = 1}^{M_j^{\pm}} \left(\frac{\pm 2\pi \langle \eta_{j,m}^{\pm} \rangle L}{\mu_{j,m}^{\pm}} - \frac{1}{4}\right)_+^{1/2}.
 $$
 If, moreover, $\pm 8\pi \langle \eta^{\pm}_{j,m} \rangle L  < \mu_{j,m}^{\pm}$ for all $m=1,\ldots,M_j^{\pm}$, then \eqref{22} holds true. \\
  {\rm (iv)} Let $\alpha > 2$, $\eps \in {\mathcal S}_{4,\alpha}(\re)$. Then \eqref{22} holds true again.
\end{theorem}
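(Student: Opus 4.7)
The strategy is a Birman--Schwinger reduction followed by a Floquet--Bloch localization near each extremal point $k_{j,m}^{\pm}$, producing a finite orthogonal sum of effective one-dimensional Hamiltonians of the form \eqref{x4a} to which an auxiliary asymptotic result (proved in the Appendix) can be applied.

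First, I write the perturbation $V := H_\beta - H_{\beta-\eps}$ as a symmetric second-order differential operator whose coefficients are affine in $\eps$, and factorize $V = V_1^{*} J V_2$ with $V_1,V_2$ first-order differential operators and $J$ a bounded self-adjoint involution, so as to absorb the quadratic-in-$\eps$ part into error terms. Via a variant of the Birman--Schwinger principle suited to gap edges, combined with a two-sided min--max bracketing (necessary when $j \geq 1$, since ${\mathcal E}_j^\pm$ then has essential spectrum on both sides), ${\mathcal N}_j^{\pm}(\lambda)$ is controlled, up to $O(1)$ remainders, by the rank of a spectral projector of a compact operator of the schematic form $V_1\,\one_{{\mathcal I}_\lambda^\pm}(H_\beta)\,(H_\beta - z_\lambda)^{-1}\,J\,V_2^{*}$, where ${\mathcal I}_\lambda^\pm$ is a shrinking interval adjacent to ${\mathcal E}_j^\pm$.

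Second, the Floquet--Bloch representation \eqref{x3} together with the regularity of the edge permit a decomposition of $\one_{{\mathcal I}_\lambda^\pm}(H_\beta)$ into the direct-integral projection $\Pi_j^\pm$ onto the band function $E^{\pm}_{\ell(j)}$ restricted to ${\mathcal I}_j^\pm = \bigcup_m {\mathcal I}_{j,m}^\pm$, plus a complementary piece whose contribution has rank $O(1)$ because $h_\beta(k) - {\mathcal E}_j^\pm$ is bounded away from zero there. On each $m$-cluster, \eqref{x25} yields the Taylor expansion $E_{\ell(j)}^\pm(k) - {\mathcal E}_j^\pm = \pm\mu_{j,m}^\pm(k-k_{j,m}^\pm)^2 + O((k-k_{j,m}^\pm)^3)$, while expanding the matrix element $\langle \psi_j^\pm(\cdot;k),\,V\,\psi_j^\pm(\cdot;k)\rangle$ to leading order in $\eps$ and integrating by parts in the transverse variable $x_t$ produces exactly the integrand of \eqref{x1}. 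Consequently, after inverting the Floquet--Bloch map, the reduced operator on the $m$-th cluster becomes the eigenvalue counting problem for
\bel{planeff}
-\mu_{j,m}^\pm \frac{d^2}{dx^2} \mp \eta_{j,m}^\pm(x)\,\eps(x), \qquad x \in \re,
\ee
self-adjoint in ${\rm L}^2(\re)$, up to errors of order $\eps\cdot(k-k_{j,m}^\pm)$ and $\eps^2$ which are negligible thanks to $\eps \in {\mathcal S}_{4,\alpha}(\re)$.

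Third, I invoke the Appendix proposition on the eigenvalue asymptotics of operators of the form \eqref{planeff}: a Bloch-averaging argument shows that only the mean value $\langle \eta_{j,m}^\pm\rangle$ enters the leading asymptotics, producing case (i) through the standard quasi-classical Weyl formula (clusters with $\pm\langle \eta_{j,m}^\pm\rangle \leq 0$ are absorbed into the $O(1)$ remainder), case (iii) through the logarithmic asymptotics in the spirit of \cite{KS1}, and case (iv) by a Bargmann-type bound when $\alpha>2$. Case (ii) is handled by combining a Cwikel--Lieb--Rozenblum estimate with the observation that the zero-mean oscillating factor $\eta_{j,m}^\pm - \langle \eta_{j,m}^\pm\rangle$ multiplying $\eps$ produces, after one integration by parts in the effective 1D variable, a potential decaying one order faster than $\eps$ itself. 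The main obstacle is verifying rigorously, in the presence of a non-sign-definite second-order perturbation and (for $j \geq 1$) two-sided essential spectrum, that all corrections to \eqref{planeff}---higher-order $k$-Taylor remainders, the $\eps^2$ contribution to $V$, off-diagonal interactions between distinct $m$-clusters, and the spectrum of $H_\beta$ away from ${\mathcal E}_j^\pm$---contribute only $o(1)$ times the leading term of ${\mathcal N}_j^\pm(\lambda)$ in each of the decay regimes $\alpha \in (0,2)$, $\alpha = 2$, and $\alpha > 2$.
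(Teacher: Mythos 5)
Your plan follows the same architecture as the paper's proof: two-sided bracketing via the band projection onto a neighbourhood of the extremal points, the Birman--Schwinger principle, Taylor expansion of $E_{\ell(j)}^{\pm}$, and averaging of the periodic factor $\eta_{j,m}^{\pm}$ so that only $\langle \eta_{j,m}^{\pm}\rangle$ survives in the leading term. However, there is a genuine gap precisely where you flag "the main obstacle," and it is not a routine verification. The corrections to your effective operator $-\mu_{j,m}^{\pm}\frac{d^2}{dx^2} \mp \eta_{j,m}^{\pm}\eps$ are \emph{not} negligible in the sense your second step asserts. Because the perturbation $H_{\beta-\eps}-H_{\beta}$ is a non-sign-definite second-order operator, the cross terms between the band subspace and its complement cannot be discarded by monotonicity; after commuting $\eps$ and its derivatives through powers of $H_{\beta}^{-1}$ (this is what consumes the four derivatives in ${\mathcal S}_{4,\alpha}$), they survive as error potentials of the form $c\,(1+x^2)^{-\alpha(1+\nu)/2}$, $\nu \in (0,1)$. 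These errors are exactly what produce the exponent $\frac12-\frac{1}{2\alpha}-\kappa$ in \eqref{x2}: if all corrections were truly negligible, case (ii) would yield $O(1)$ for every $\alpha$, which is not what the theorem claims for $\alpha \le 1$. So the statement itself tells you that your reduction, as described, cannot be exact up to negligible errors.

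Your mechanism for case (ii) is also not the correct one. You claim the zero-mean factor $(\eta_{j,m}^{\pm}-\langle\eta_{j,m}^{\pm}\rangle)\eps$ gains one order of decay after an integration by parts; in the paper this term instead contributes only $O(1)$: after conjugation by the Fourier transform its kernel involves $\hat\eps(k-k'-\ell)$ with $\ell\neq 0$, and Lemma \ref{l44} (using all four derivatives of $\eps$) shows the resulting potential decays like $(1+|x|)^{-3}$, so Lemma \ref{l42} (iii) applies. (Note also that the Cwikel--Lieb--Rozenblum bound fails in dimension one; the relevant tools are the Bargmann-type and borderline results collected in Lemma \ref{l42}.) Finally, you propose to quote Proposition \ref{p21} directly, whereas the paper must redo the averaging inside the waveguide reduction (Proposition \ref{p452}) because the reduced operator is matrix-valued over the clusters $m=1,\ldots,M_j^{\pm}$, carries the $\rho_\nu$-type error potentials, and the off-diagonal cluster interactions must themselves be shown to contribute $O(1)$ by the same Fourier-smoothness argument. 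Your route could be completed, but only after proving that every reduction error is dominated by $C(1+x^2)^{-\alpha(1+\nu)/2}$ with $\nu$ arbitrarily close to $1$ --- which is the content you have deferred.
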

{\em Remark}: As mentioned in the Introduction, the case of a constant $\beta$ was considered in \cite{BKRS}; in this case our Theorem \ref{th1} reduces, after minor modifications of the assumptions, to \cite[Theorem 4.4]{BKRS}. Note that if $\beta$ is constant, then $(-\infty, {\mathcal E}_0^+)$ is the only gap in $\sigma(H_\beta)$, the value $ {\mathcal E}_0^+$ is attained only by the band function $E_1$ at the unique point $k_{0,1}^+ = 0$, and $E_1''(0) > 0$
(see \cite[Theorem 3.1]{BKRS}). Moreover, the eigenfunction $\psi_0^+(\cdot;0)$ is real valued and independent of $x_3$, so that we have
$$
\eta_{0,1}^+ = \langle \eta_{0,1}^+ \rangle = 2\beta \int_\omega \left( \partial_\varphi \psi_0^+(x_t;0)\right)^2 dx_t.
$$
It could be shown that the analogue of Theorem \ref{th1} (ii)  could be then strengthened, namely $\eta_{0,1}^+ = 0$ implies that the spectrum of $H_{\beta-\epsilon}$ is purely essential for any reasonable decaying $\eps$, so that ${\mathcal N}_0^+(\lambda) = 0$ for any $\lambda > 0$.\\
\subsection{Comments on the main results}
\label{ss23}
Introduce the operator
$$
{\mathcal H}_j^{\pm} : = \bigoplus_{m=1}^{M_j^{\pm}} \left(- \mu_{j,m}^{\pm} \frac{d^2}{dx^2} \mp 2\pi \langle \eta^{\pm}_{j,m} \rangle \eps \right),
$$
self-adjoint in ${\rm L}^2(\re; {\mathbb C}^{M_j^{\pm}})$. Proposition \ref{p21} below shows that ${\mathcal H}_j^{\pm}$ could be considered as the effective Hamiltonian which governs the asymptotic behaviour of the discrete spectrum of $H_{\beta-\eps}$ near the regular spectral edge ${\mathcal E}_j^{\pm}$. More precisely,
    \bel{x60}
{\mathcal N}_j^{\pm}(\lambda) \sim N_{(-\infty, -\lambda)}\left({\mathcal H}_j^{\pm}\right), \quad \lambda \downarrow 0.
    \ee
Asymptotic relation \eqref{x60} means that:
\begin{itemize}
\item We have
$$
\lim_{\lambda \downarrow 0} \frac{{\mathcal N}_j^{\pm}(\lambda)}{N_{(-\infty, -\lambda)}\left({\mathcal H}_j^{\pm}\right)} = 1
$$
if $N_{(-\infty, -\lambda)}\left({\mathcal H}_j^{\pm}\right)$ grows unboundedly as $\lambda \downarrow 0$ (except, possibly, for the case where \eqref{x2} holds true; then ${\mathcal N}_j^{\pm}(\lambda)$ and $N_{(-\infty, -\lambda)}\left({\mathcal H}_j^{\pm}\right)$ admit upper bounds of the same order);
\item The function ${\mathcal N}_j^{\pm}(\lambda)$ remains bounded as $\lambda \downarrow 0$ if the same is true for $N_{(-\infty, -\lambda)}\left({\mathcal H}_j^{\pm}\right)$ (again except, possibly, for the case where \eqref{x2} holds true).
\end{itemize}
Let us now formulate Proposition \ref{p21}. Let $\eta \in C({\mathbb T}; \re)$. Set
$$
\eta_\ell : = \frac{1}{\sqrt{2\pi}} \int_0^{2\pi} \eta(x) e^{-i\ell x} dx, \quad \ell \in {\mathbb Z},
\quad \langle \eta \rangle : = \frac{1}{2\pi} \int_0^{2\pi}  \eta(x) dx.
$$
Let $\mu > 0$, $\eps \in {\rm L}^{\infty}(\re; \re)$. Introduce the operator
$$
h_{\rm eff} : = -\mu \frac{d^2}{dx^2} - \eta(x) \eps(x), \quad x \in \re,
$$
self-adjoint in ${\rm L}^2(\re)$.
\begin{pr} \label{p21}
Let $\eta \in C({\mathbb T}; \re)$. Assume that $\left\{\eta_\ell\right\}_{\ell \in {\mathbb Z}} \in \ell^1({\mathbb Z})$.\\
{\rm (i)} Let $\alpha \in (0,2)$, $\eps \in {\mathcal S}_{4,\alpha}^+(\re)$. Assume that $\langle \eta \rangle > 0$. Then we have
  $$
  N_{(-\infty, -\lambda)}(h_{\rm eff}) =
  \frac{1}{2\pi} \left|\left\{(x,k) \in T^*\re \, |\, \mu k^2 - \langle \eta \rangle \eps(x) < -\lambda \right\}\right|(1+o(1)) =
  $$
  $$
  \frac{1}{\pi \sqrt{\mu}}   \int_\re \left(\langle \eta \rangle \eps(x) - \lambda\right)_+^{1/2}dx \, (1+o(1)) \asymp \lambda^{\frac{1}{2} - \frac{1}{\alpha}}, \quad \lambda \downarrow 0.
  $$
  If, on the contrary, $\langle \eta \rangle < 0$, then
  \bel{x35}
  N_{(-\infty, -\lambda)}(h_{\rm eff}) = O(1), \quad \lambda \downarrow 0.
  \ee
  {\rm (ii)} Let $\alpha \in (0,2)$, $\eps  \in {\mathcal S}_{4,\alpha}(\re)$. Assume that $\langle \eta \rangle =  0$
  Then for each $\kappa >0$ we have
  $$
  N_{(-\infty, -\lambda)}(h_{\rm eff}) = O(\lambda^{\frac{1}{2}-\frac{1}{2\alpha} -\kappa}), \quad \lambda \downarrow 0,
  $$
  if $\alpha \in (0,1]$, while \eqref{x35} holds true if $\alpha \in (1,2)$. \\
  {\rm (iii)} Let $\alpha = 2$, $\eps  \in {\mathcal S}_{4,2}(\re)$. Suppose moreover that there exists $L \in \re$ such that
 $ \lim_{|x| \to \infty}x^2 \eps(x) = L$. Then we have
 $$
 \lim_{\lambda \downarrow 0} |\ln{\lambda}|^{-1} N_{(-\infty, -\lambda)}(h_{\rm eff}) = \frac{1}{\pi}  \left(\frac{\langle \eta\rangle L}{\mu}  - \frac{1}{4}\right)_+^{1/2}.
 $$
 If, moreover, $4 \langle \eta \rangle L  < \mu$, then \eqref{x35} holds true. \\
  {\rm (iv)} Let $\alpha > 2$, $\eps  \in {\mathcal S}_{0,\alpha}(\re)$. Then \eqref{x35} holds true again.
\end{pr}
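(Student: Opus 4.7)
The plan is to prove Proposition~\ref{p21} via a quadratic-form bracketing argument which isolates the mean $\langle\eta\rangle$ of the periodic amplitude and relegates the oscillatory remainder $\tilde\eta := \eta - \langle\eta\rangle$ to a subordinate perturbation via integration by parts. First I would use the hypothesis $\{\eta_\ell\}\in\ell^1(\mathbb{Z})$ to write $\tilde\eta=\Xi'$ for a bounded continuous $2\pi$-periodic antiderivative $\Xi$. For $u\in C^1_c(\re)$,
$$\int_\re \tilde\eta\,\eps\,|u|^2\,dx \;=\; -\int_\re \Xi\,\eps'\,|u|^2\,dx \;-\; 2\Re\int_\re \Xi\,\eps\,\bar u\,u'\,dx,$$
and a Cauchy--Schwarz estimate on the last term with parameter $\delta\in(0,1)$ would produce the two-sided operator inequalities
$$(1-\delta)\mu\left(-\frac{d^2}{dx^2}\right) - \langle\eta\rangle\eps - W_\delta \;\leq\; h_{\rm eff} \;\leq\; (1+\delta)\mu\left(-\frac{d^2}{dx^2}\right) - \langle\eta\rangle\eps + W_\delta,$$
with $0\leq W_\delta(x)\leq C_\delta\bigl(\eps(x)^2+|\eps'(x)|\bigr)$, a potential decaying at the strictly improved rate $\min(2\alpha,\alpha+1)>\alpha$.

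For part~(i), $\langle\eta\rangle\neq 0$, I would note that the bracketing operator is a 1D Schr\"odinger operator with principal potential $-\langle\eta\rangle\eps$ of decay rate $\alpha$. When $\langle\eta\rangle>0$ and $\eps\in\mathcal{S}_{4,\alpha}^+(\re)$, the classical semiclassical Weyl asymptotics (see \cite[Theorem~XIII.82]{RS4}) would yield the leading term $\frac{1}{\pi\sqrt{\mu}}\int_\re(\langle\eta\rangle\eps(x)-\lambda)_+^{1/2}\,dx$, with $W_\delta$ contributing only to lower order; the claimed asymptotics would follow upon passing $\delta\downarrow 0$ in the resulting two-sided mini--max inequality for the counting function. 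When $\langle\eta\rangle<0$, the lower-bracketing potential $-\langle\eta\rangle\eps-W_\delta$ is strictly positive at infinity and its negative part is compactly supported, so the Bargmann--Calogero bound $N_{(-\infty,0)}(-d^2/dx^2+V)\leq 1+\int_\re|x|V_-\,dx$ would give~\eqref{x35}.

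For part~(ii), $\langle\eta\rangle=0$, the bracketing specializes to $h_{\rm eff}\geq(1-\delta)\mu(-d^2/dx^2)-W_\delta$. When $\alpha\in(1,2)$, the improved decay rate $\min(2\alpha,\alpha+1)>2$ would make $\int_\re|x|W_\delta\,dx<\infty$, and Bargmann--Calogero would again yield~\eqref{x35}. When $\alpha\in(0,1]$ one has $W_\delta\in L^p(\re)$ for every $p>1/(2\alpha)$, and I would invoke the 1D Lieb--Thirring inequality $\sum_j|E_j|^\gamma\leq L_{\gamma,1}\int W_\delta^{\gamma+1/2}\,dx$ (valid for $\gamma\geq 1/2$) to obtain $N_{(-\infty,-\lambda)}(h_{\rm eff})\leq C_\gamma\,\lambda^{-\gamma}\int W_\delta^{\gamma+1/2}\,dx$; choosing $\gamma>\max\bigl(1/2,(1-\alpha)/(2\alpha)\bigr)$ as close to this bound as desired would deliver the $O(\lambda^{1/2-1/(2\alpha)-\kappa})$ estimate. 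Part~(iv) would follow directly from Bargmann--Calogero since $\alpha>2$ gives $\int_\re|x||\eta\eps|\,dx<\infty$.

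For the critical case~(iii) $\alpha=2$, I would combine the bracketing with the condition $\lim_{|x|\to\infty}x^2\eps(x)=L$ to reduce the spectral problem, modulo compact perturbations, to the exactly solvable model $-\mu(1\pm\delta)\frac{d^2}{dx^2}-\langle\eta\rangle L/x^2$, and then invoke the Kirsch--Simon logarithmic asymptotics for Schr\"odinger operators with inverse-square tail~\cite{KS1}: the effective coupling $\langle\eta\rangle L/\mu$ compared with the critical threshold $1/4$ governs both the leading logarithmic rate and the finiteness dichotomy $4\langle\eta\rangle L<\mu$. The hard part will be the regime $\alpha\in(0,1]$ of part~(ii): a single integration by parts extracts from $\tilde\eta\eps$ a non-oscillatory residue of size $\eps^2$ which iterated integration by parts cannot improve, and the 1D Lieb--Thirring constraint $\gamma\geq 1/2$ then forces the small loss $\lambda^{-\kappa}$ in the exponent rather than the formal Weyl rate $\lambda^{1/2-1/(2\alpha)}$.
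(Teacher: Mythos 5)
Your route is genuinely different from the paper's. The paper passes to the Fourier side via the Birman--Schwinger principle, splits the kernel of $a(\lambda)\mathcal{F}\eta\eps\mathcal{F}^*a(\lambda)$ into the zero-frequency (mean) part and the oscillatory remainder, and controls the remainder through Lemma \ref{l44}: since $\widehat{(\eta-\langle\eta\rangle)\eps}$ is only sampled at frequencies bounded away from $0$, that part acts like a Schr\"odinger operator with an $O((1+|x|)^{-3})$ potential and contributes $O(1)$; the price is the auxiliary error $\rho_\nu\sim|x|^{-\alpha(1+\nu)}$, $\nu<1$, which is the source of the $\kappa$-loss in part (ii). Your physical-space integration by parts, writing $\eta-\langle\eta\rangle=\Xi'$ with $\Xi$ periodic and bounded, is cleaner and in fact needs only $\eta\in C({\mathbb T})$ (the $\ell^1$ hypothesis on the Fourier coefficients is what the paper's Fourier-side argument uses); it produces an error potential $W_\delta\lesssim\eps^2+|\eps'|=O\bigl((1+|x|)^{-\min(2\alpha,\alpha+1)}\bigr)$, at least as good as the paper's $\rho_\nu$. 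Parts (i), (iii), (iv), and part (ii) for $\alpha\in(1,2)$ go through essentially as you describe, modulo the routine step of dominating the non-smooth $W_\delta$ by a smooth power weight before invoking Lemma \ref{l42}, and of checking that $\int_\re(\langle\eta\rangle\eps\mp W_\delta-\lambda)_+^{1/2}dx\sim\int_\re(\langle\eta\rangle\eps-\lambda)_+^{1/2}dx$ because $W_\delta=o(\eps)$.

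The gap is in part (ii) for $\alpha\in(1/2,1]$. There $\min(2\alpha,\alpha+1)=2\alpha$, the integrability requirement $\int_\re W_\delta^{\gamma+1/2}dx<\infty$ reads $\gamma>(1-\alpha)/(2\alpha)$, and since $(1-\alpha)/(2\alpha)<1/2$ the binding constraint is the structural one $\gamma\geq 1/2$ of the one-dimensional Lieb--Thirring inequality. Your bound therefore degenerates to $N_{(-\infty,-\lambda)}(h_{\rm eff})=O(\lambda^{-1/2})$, whereas the target $O(\lambda^{\frac12-\frac1{2\alpha}-\kappa})=O(\lambda^{-\frac{1-\alpha}{2\alpha}-\kappa})$ is strictly stronger for these $\alpha$ and small $\kappa$; ``choosing $\gamma$ as close to the max as desired'' does not deliver it. The repair is to drop Lieb--Thirring here: dominate $W_\delta\leq C(1+|x|)^{-2\alpha}$ and apply Lemma \ref{l42} (i) when $2\alpha<2$ (giving the rate $\lambda^{\frac12-\frac1{2\alpha}}$, in fact without any $\kappa$-loss) and Lemma \ref{l42} (ii) when $\alpha=1$ (giving $O(|\ln\lambda|)=O(\lambda^{-\kappa})$). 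With that single replacement your argument proves the proposition, and for $\alpha\in(0,1)$ yields a slightly sharper bound in part (ii) than the one stated.
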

Possibly,  Proposition \ref{p21} is known to the experts. However, we could not find it in the literature and that is why we include its proof in the Appendix. The proposition could be of independent interest due, in particular, to the non semiclassical nature of some of its results. Proposition \ref{p21} admits far going extensions to multidimensional Schr\"odinger operators; hopefully, we will consider them in a future work.

\section{Proof of the Main Result}
\label{s4} \setcounter{equation}{0}

\subsection{Auxiliary results}
\label{ss41}
This subsection contains auxiliary results needed for the proof of Theorem \ref{th1}.\\
Let $X_j$, $j=1,2$, be two separable Hilbert spaces. We denote by $S_\infty(X_1,X_2)$ the class of linear compact
operators $T: X_1 \to X_2$. If $X_1 = X_2 = X$, we write $S_\infty(X)$ instead of $S_\infty(X,X)$. Let $T = T^* \in S_\infty(X)$. For $s>0$ set
$$
n_{\pm}(s;T) : = N_{(s,\infty)}(\pm T)
$$
(see \eqref{24}); thus, $n_+(s;T)$ (resp., $n_-(s;T))$ is just the number of the eigenvalues of $T$ larger than $s$ (resp., smaller than $-s$), and counted with the multiplicities. If $T_j = T_j^* \in S_\infty(X)$, $j=1,2$, then the Weyl inequalities
    \bel{41}
    n_{\pm}(s_1 + s_2; T_1 + T_2) \leq n_{\pm}(s_1;T_1) + n_{\pm}(s_2;T_2)
    \ee
    hold for $s_j>0$, $j=1,2$, (see e.g. \cite[Theorem 9,  Section 2, Chapter 9]{BSo2}). For $T \in S_\infty(X_1,X_2)$ and $s>0$ put
    \bel{42}
    n_*(s; T) : = n_+(s^2; T^*T).
    \ee
    Thus, $n_*(s;T)$ is the number of the singular values of $T$ larger than $s$, and counted with the multiplicities. If $T_j \in S_\infty(X_1, X_2)$, and $s_j>0$, $j=1,2$, then the Ky Fan inequalities
    \bel{43}
    n_*(s_1 + s_2; T_1 + T_2) \leq n_*(s_1;T_1) + n_*(s_2;T_2)
    \ee
    hold true (see e.g. \cite[Eq. (17), Section 1, Chapter 11)]{BSo2}).
    The following lemma contains spectral estimates for finite-rank and bounded perturbations.
    \begin{lemma} \label{l43} Let $-\infty < a < b <\infty$, $T=T^*$.\\
    {\rm (i) (\cite[Theorem 3, Section 3, Chapter 9]{BSo2})} Let $S= S^*$ be bounded and ${\rm rank}\,S < \infty$. Then we have
    $$
    N_{(a,b)}(T) - {\rm rank}\,S \leq N_{(a,b)}(T + S) \leq N_{(a,b)}(T) + {\rm rank}\,S.
    $$
    {\rm (ii) (\cite[Lemma 3, Section 4, Chapter 9]{BSo2})} Let $S= S^*$ be bounded and $\sigma(S) \subset [a_1,b_1]$. Then we have
    $$
    N_{(a,b)}(T) \leq N_{(a+a_1,b+b_1)}(T + S).
    $$
    \end{lemma}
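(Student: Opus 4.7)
The plan is to derive both claims from the min-max characterisation of the eigenvalue counting function, which is the standard Birman--Solomyak route. Setting, for self-adjoint $A$,
$$
\lambda_n(A) := \inf_{\substack{L \subset D(A) \\ \dim L = n}} \sup_{0 \ne u \in L} \frac{\langle Au, u\rangle}{\|u\|^2}
$$
with the convention that $\lambda_n(A)$ plateaus at $\inf \sigma_{\rm ess}(A)$ once $n$ exceeds the number of discrete eigenvalues of $A$ below its essential spectrum, one has $N_{(-\infty,\lambda)}(A) = \#\{n : \lambda_n(A) < \lambda\}$, and both (i) and (ii) reduce to pointwise comparisons of the sequences $\lambda_n(T)$ and $\lambda_n(T+S)$.

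For (ii), the hypothesis $\sigma(S) \subset [a_1, b_1]$ for self-adjoint bounded $S$ is equivalent to the operator inequality $a_1 I \leq S \leq b_1 I$, whence $T + a_1 I \leq T + S \leq T + b_1 I$. Monotonicity of the variational eigenvalues under operator ordering then yields $\lambda_n(T) + a_1 \leq \lambda_n(T+S) \leq \lambda_n(T) + b_1$, so every index $n$ with $\lambda_n(T) \in (a,b)$ satisfies $\lambda_n(T+S) \in (a + a_1, b + b_1)$, and the desired inequality follows by counting.

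For (i), I would reduce to the rank-one case and iterate. Writing $S = \sum_{i=1}^r \sigma_i |v_i\rangle\langle v_i|$ with $r = \mathrm{rank}\, S$ and $\sigma_i \ne 0$, it suffices to prove that a single rank-one self-adjoint perturbation shifts $N_{(a,b)}$ by at most one and then iterate $r$ times. For the rank-one step, Cauchy--Courant interlacing gives $\lambda_n(T) \leq \lambda_n(T + \sigma |v\rangle\langle v|) \leq \lambda_{n+1}(T)$ when $\sigma > 0$, with the reverse indexing when $\sigma < 0$. A direct inspection of which indices can enter or leave $(a,b)$ under this one-step interlacing bounds the change in $N_{(a,b)}$ by $1$ in absolute value.

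The main technical obstacle is the case where $(a,b)$ meets $\sigma_{\rm ess}(T)$, so that $N_{(a,b)}(T)$ may be infinite. In (i) this is handled by Weyl's theorem: $S$ is finite rank hence compact, so $\sigma_{\rm ess}(T+S) = \sigma_{\rm ess}(T)$ and both sides of the bilateral inequality are simultaneously infinite, giving a trivially valid statement. In (ii) the analogous observation is that $N_{(a,b)}(T) = \infty$ forces $N_{(a+a_1, b+b_1)}(T+S) = \infty$ via the spectral-projection comparison inherited from $a_1 I \leq S \leq b_1 I$. With these caveats, the plateau convention keeps the min-max arguments valid throughout.
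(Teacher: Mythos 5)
The paper offers no proof of this lemma: both parts are quoted directly from Birman--Solomyak \cite{BSo2}, so your attempt has to stand on its own. It contains a genuine gap, and the gap sits exactly where the lemma is used in this paper. Your whole argument rests on identifying $N_{(a,b)}(T)$ with the number of indices $n$ for which the min-max value $\lambda_n(T)$ lies in $(a,b)$. That identification is valid only when there is no essential spectrum of $T$ below $b$ (equivalently, when $N_{(-\infty,a]}(T)<\infty$). If $(a,b)$ lies in a \emph{bounded} gap of $\sigma_{\rm ess}(T)$ --- which is precisely the situation in the proof of Proposition \ref{p41}, where the lemma is applied on intervals $({\mathcal E}_j^-+\lambda,{\mathcal E}\mp s)$ with essential spectrum present below ${\mathcal E}_j^-$ --- then every $\lambda_n(T)$ satisfies $\lambda_n(T)\leq\inf\sigma_{\rm ess}(T)\leq a$, the sequence $\{\lambda_n(T)\}$ carries no information whatsoever about the finitely many eigenvalues of $T$ in $(a,b)$, and the decomposition $N_{(a,b)}=N_{(-\infty,b)}-N_{(-\infty,a]}$ degenerates to $\infty-\infty$. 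The caveat in your last paragraph addresses a different and easier degenerate case, namely $(a,b)\cap\sigma_{\rm ess}(T)\neq\emptyset$ (where everything is infinite); it does not touch the problematic case of a \emph{finite} count inside an internal gap. Within the semi-bounded regime ($(-\infty,b)$ below the essential spectrum) your interlacing argument for (i) and your monotonicity argument for (ii) are essentially correct, but that regime only covers the gap $(-\infty,{\mathcal E}_0^+)$ and not the bounded gaps that motivate the paper.

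The standard repair --- and, in substance, the argument behind the cited results in \cite{BSo2} --- is to replace the min-max sequence of $T$ by Glazman's lemma applied to the nonnegative operator $(T-c)^2$ with $c=(a+b)/2$ and $r=(b-a)/2$: one has $N_{(a,b)}(T)=\sup\{\dim L :\ \|(T-c)u\|<r\|u\|\ \text{for all}\ 0\neq u\in L\}$, an identity that remains valid for intervals inside internal gaps. For part (i) one takes $L=\mathrm{Ran}\,\one_{(a,b)}(T+S)\cap\ker S$, on which $(T+S-c)u=(T-c)u$, so at most ${\rm rank}\,S$ dimensions are lost, and symmetrically for the reverse inequality. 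Part (ii) likewise requires a two-sided variational argument of this type; your observation that $\sigma(S)\subset[a_1,b_1]$ is equivalent to $a_1I\leq S\leq b_1I$ is the correct starting point, but it has to be inserted into the quadratic form controlling $N_{(a+a_1,b+b_1)}(T+S)$ rather than into the ordering of min-max values that do not see the gap.
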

    Further, we recall an abstract version of the Birman-Schwinger principle,  suitable for our purposes.
    \begin{lemma} \label{l41}
    {\rm (\cite[Lemma 1.1]{bir})} Let $T = T^* \geq 0$, and let $S = S^*$ be relatively compact in the sense of the quadratic forms with respect to $T$. Then we have
    $$
    N_{(-\infty,  - \lambda)}(T - rS) =
    n_+(r^{-1}; (T + \lambda)^{-1/2} S (T + \lambda)^{-1/2})
    $$
    for any $r>0$ and $\lambda > 0$ .
    \end{lemma}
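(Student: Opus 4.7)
The plan is to combine Glazman's min-max lemma with a change of variable at the level of quadratic forms. First, since $T\geq 0$ and $\lambda>0$, the operator $A:=(T+\lambda)^{1/2}$ is positive and boundedly invertible, and trivially
\[
N_{(-\infty,-\lambda)}(T-rS) \;=\; N_{(-\infty,0)}(T+\lambda-rS),
\]
so the task reduces to counting the negative eigenvalues of $T+\lambda-rS$. On the form domain of $T$ this operator is generated by the quadratic form $\|Au\|^2 - r\,\mathfrak{s}[u]$, where $\mathfrak{s}$ denotes the closed symmetric form associated with $S$.

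Next, by Glazman's lemma, $N_{(-\infty,0)}(T+\lambda-rS)$ equals the supremum of $\dim L$ over subspaces $L$ of the form domain of $T$ on which $\|Au\|^2 < r\,\mathfrak{s}[u]$ for every nonzero $u\in L$. I would then perform the substitution $v=Au$; because $A^{-1}$ is bounded, this induces a bijection between subspaces of the form domain of $T$ and subspaces of the whole Hilbert space. The defining inequality rewrites as
\[
\|v\|^2 < r\,\langle K v,v\rangle, \qquad K:=A^{-1}SA^{-1},
\]
where $K$ is first understood through the sesquilinear form $(v,w)\mapsto \mathfrak{s}[A^{-1}v,A^{-1}w]$. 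Under the assumption that $S$ is form-compact relative to $T$, this form is bounded on the whole Hilbert space and the associated operator $K$ is self-adjoint and compact. Applying the min-max principle once more, this time to $K$, the maximal dimension of a subspace on which $\langle Kv,v\rangle>r^{-1}\|v\|^2$ holds for all nonzero vectors equals precisely $n_+(r^{-1};K)$, which is the right-hand side of the claim.

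The main obstacle is to make the form-level substitution rigorous: one must verify that relative form-compactness of $S$ with respect to $T$ is indeed equivalent to compactness of $K=A^{-1}SA^{-1}$ on the ambient Hilbert space, so that the min-max principle applies to $K$ as a genuine self-adjoint compact operator and $n_+(r^{-1};K)$ is well-defined and finite for each $r>0$. Once this equivalence is in place, the two min-max characterisations are matched by the $A$-substitution, subspaces of the required dimensions correspond bijectively through $L\leftrightarrow A L$, and the stated identity follows with no further work.
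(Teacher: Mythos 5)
The paper does not prove this lemma: it is quoted verbatim from Birman's 1961 paper (\cite[Lemma 1.1]{bir}), so there is no internal proof to compare against. Your argument is the classical one and is correct: shift by $\lambda$, apply Glazman's variational lemma to the form $\|Au\|^2 - r\,\mathfrak{s}[u]$ on $Q(T)$, and transport subspaces through the bijection $u \mapsto Au$ with $A=(T+\lambda)^{1/2}$ to recast the negativity condition as $\langle Kv,v\rangle > r^{-1}\|v\|^2$ for $K=A^{-1}SA^{-1}$. The point you flag as the ``main obstacle'' is in fact essentially definitional: relative form-compactness of $S$ with respect to $T$ means precisely that $(T+1)^{-1/2}S(T+1)^{-1/2}$ (defined through the form $\mathfrak{s}$) extends to a compact self-adjoint operator, and replacing $1$ by $\lambda>0$ costs nothing since $(T+1)^{1/2}(T+\lambda)^{-1/2}$ is bounded with bounded inverse; this also guarantees $\sigma_{\rm ess}(T-rS)\subset[0,\infty)$, so both sides of the identity are finite. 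With that observation the two min-max characterisations match and the proof is complete.
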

    Our next lemma contains well known  results on the asymptotic behaviour
of the discrete spectrum for 1D Schr\"odinger operators.

\begin{lemma} \label{l42}
Assume that $\mu > 0$. \\
{\rm (i)} Let $V \in {\mathcal S}^+_{1,\alpha}(\re)$ with $\alpha \in (0,2)$.
Then we have
$$
N_{(-\infty,-\lambda)}\left(-\mu \frac{d^2}{dx^2} - V\right) = \frac{1}{\pi \sqrt{\mu}} \int_\re(V(x)-\lambda)_+^{1/2} dx \, (1+o(1)) \asymp \lambda^{\frac{1}{2} - \frac{1}{\alpha}}, \quad \lambda \downarrow 0.
$$
(ii) Assume that $V \in {\mathcal S}_{0,2}(\re)$ and  there exists a finite limit $L : = \lim_{|x| \to \infty}x^2V(x)$. Then
$$
\lim_{\lambda\downarrow 0}\,  |\ln{\lambda}|^{-1} N_{(-\infty,-\lambda)}\left(-\mu \frac{d^2}{dx^2} - V\right)  = \frac{1}{\pi} \left(\frac{L}{\mu} -
\frac{1}{4}\right)_+^{1/2}.
    $$
If, moreover, $4L < \mu$, then
    \bel{46}
N_{(-\infty,-\lambda)}\left(-\mu \frac{d^2}{dx^2} - V\right) = O(1), \quad \lambda
\downarrow 0.
    \ee
    (iii) Suppose $V \in {\mathcal S}_{0,\alpha}(\re)$ with $\alpha \in (2,\infty)$. Then \eqref{46} holds true again.
\end{lemma}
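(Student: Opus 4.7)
The plan is to absorb the coefficient $\mu$ into the potential via the identity
$$N_{(-\infty,-\lambda)}\Bigl(-\mu \tfrac{d^2}{dx^2} - V\Bigr) = N_{(-\infty,-\lambda/\mu)}\Bigl(-\tfrac{d^2}{dx^2} - \mu^{-1}V\Bigr),$$
which reduces every statement to the case $\mu = 1$, and then to invoke the classical one-dimensional results already cited in the introduction. The potential $\mu^{-1}V$ inherits the same decay class as $V$ throughout.

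For part (i), since $\mu^{-1}V \in {\mathcal S}^+_{1,\alpha}(\re)$ with $\alpha < 2$, the hypotheses of \cite[Theorem XIII.82]{RS4} are satisfied; that theorem yields the semiclassical (phase-space) formula with prefactor $\frac{1}{\pi}$, and undoing the normalization produces the prefactor $\frac{1}{\pi\sqrt{\mu}}$. The two-sided bound $\lambda^{\frac{1}{2}-\frac{1}{\alpha}}$ is then an elementary computation using $V(x) \asymp |x|^{-\alpha}$ at infinity: the turning points lie at $|x| \asymp \lambda^{-1/\alpha}$, and the substitution $x = \lambda^{-1/\alpha} y$ turns the integral into a scale-invariant one multiplied by $\lambda^{\frac{1}{2}-\frac{1}{\alpha}}$, where integrability near the turning points is secured by $\alpha < 2$.

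For part (ii), the borderline inverse-square case, I would appeal to \cite{KS1}. The constant $\frac{1}{4}$ originates from the one-dimensional Hardy inequality $-\frac{d^2}{dx^2} \geq \frac{1}{4x^2}$ (in the quadratic form sense on half-lines with Dirichlet condition at the origin). In the sub-critical regime $4L < \mu$ one combines Hardy with the asymptotic $V(x) = L/x^2 + o(x^{-2})$ to conclude that $-\mu\partial_x^2 - V \geq 0$ outside a compact set; using Lemma \ref{l43}(i) to absorb the finite-rank contribution from the compact part then yields \eqref{46}. In the super-critical regime $4L > \mu$ the logarithmic asymptotic follows from \cite{KS1} (or from a Dirichlet--Neumann bracketing on the dyadic scales $[2^n, 2^{n+1}]$ that isolates each scale as a solvable inverse-square problem on a bounded interval, producing $O(1)$ eigenvalues per decade and hence a $|\ln\lambda|$ accumulation with the stated prefactor).

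For part (iii), I would use the one-dimensional Bargmann-type bound: the total number of negative eigenvalues of $-\frac{d^2}{dx^2} - W$ is at most $1 + \int_{\re} |x| W_+(x)\, dx$. Under $V \in {\mathcal S}_{0,\alpha}(\re)$ with $\alpha > 2$, this weighted integral converges (as $|x||V(x)| \leq c(1+|x|)^{1-\alpha}$ is integrable), so the operator has only finitely many negative eigenvalues in total, which certainly implies \eqref{46}. The main obstacle is part (ii): the sub-critical finiteness is straightforward via Hardy plus Lemma \ref{l43}(i), but pinning down the precise logarithmic prefactor in the super-critical regime is delicate and depends on matching the full counting function to that of the reference operator $-\mu\partial_x^2 - L/x^2$; absent the external citation, a careful dyadic bracketing argument would be required.
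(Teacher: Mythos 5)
Your proposal follows the same route as the paper: part (i) is deduced from \cite[Theorem XIII.82]{RS4} (with the elementary $\mu$-rescaling and the tail computation giving the $\lambda^{\frac{1}{2}-\frac{1}{\alpha}}$ bound), part (ii) from \cite{KS1}, and part (iii) from the Bargmann bound, which is precisely the content of \cite[Problem 22, Chapter XIII]{RS4} cited in the paper. The additional detail you supply (the Hardy-inequality origin of the $\tfrac14$ and the subcritical positivity argument) is consistent and correct, so there is nothing to object to.
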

The first part of the lemma is a special case of \cite[Theorem XIII.82]{RS4},
the proof of the second part is contained in \cite{KS1}, while the
third part follows from the result of \cite[Problem 22, Chapter XIII]{RS4}.\\
The last lemma in this subsection concerns  the Fourier transform of a function $u \in {\mathcal S}_{n,\alpha}(\re)$. For $u \in {\mathcal S}(\re)$, introduce its Fourier transform
    \bel{x27}
({\mathcal F}u)(k) = \hat{u}(k) : = (2\pi)^{-1/2} \int_\re e^{-ixk} u(x) dx, \quad k \in \re.
    \ee
Whenever necessary, the Fourier transform is extended by duality to the dual Schwartz class ${\mathcal S}'(\re)$. We will use the same notations for the partial Fourier transform with respect to $x_3 \in \re$ in the case $u = u(x_t,x_3)$, $(x_t,x_3) \in \Omega$.

\begin{lemma} \label{l44} Assume that  $u \in {\mathcal S}_{n,\alpha}(\re)$, $n \in {\mathbb N}$, $\alpha > 0$. Then $\hat{u} \in C^{n-1}(\re\setminus\{0\})$, and there exists a constant $C$ such that
\bel{449}
\sup_{|k| \geq \kappa} |\hat{u}^{(n-1)}(k)| \leq C \kappa^{-2n+1}
\ee
for each $\kappa \in (0,1]$.
\end{lemma}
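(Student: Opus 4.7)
The plan is to trade the decay of $u$ for regularity and decay of $\hat u$ away from the origin via iterated integration by parts, and then to exploit the faster decay of $u^{(n)}$ to control the first $n-1$ derivatives of $\widehat{u^{(n)}}$. First, since $u \in C^n(\re)$ with $|u^{(\ell)}(x)| \leq c_\ell(1+|x|)^{-\alpha-\ell}$ and $\alpha>0$, the functions $u^{(\ell)}$ for $0 \leq \ell \leq n-1$ all vanish at infinity while $u^{(n)} \in L^1(\re)$. The standard distributional identity $\widehat{u^{(n)}}(k) = (ik)^n \hat u(k)$ thus holds in $\mathcal{S}'(\re)$; restricting to $\re\setminus\{0\}$, where $(ik)^{-n}$ is smooth and nowhere zero, yields the key representation
$$
\hat u(k) = (ik)^{-n}\, \widehat{u^{(n)}}(k), \qquad k \in \re\setminus\{0\}.
$$

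Second, I would upgrade this formula into pointwise control of $\hat u^{(n-1)}$ by exploiting the \emph{stronger} decay of $u^{(n)}$: since $|x^j u^{(n)}(x)| \leq c_n |x|^j(1+|x|)^{-\alpha-n}$ and $j-\alpha-n<-1$ for every $0 \leq j \leq n-1$ (using $\alpha>0$), all the functions $x^j u^{(n)}$, $j=0,\ldots,n-1$, lie in $L^1(\re)$. Consequently $\widehat{u^{(n)}} \in C^{n-1}(\re)$ with
$$
\bigl(\widehat{u^{(n)}}\bigr)^{(j)}(k) = (-i)^j\, \widehat{x^j u^{(n)}}(k), \qquad \bigl|\bigl(\widehat{u^{(n)}}\bigr)^{(j)}(k)\bigr| \leq (2\pi)^{-1/2}\|x^j u^{(n)}\|_{L^1(\re)}.
$$

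Third, combining the two steps, Leibniz' rule applied to the product $(ik)^{-n}\,\widehat{u^{(n)}}(k)$ shows that $\hat u \in C^{n-1}(\re\setminus\{0\})$, and
$$
\hat u^{(n-1)}(k) = \sum_{j=0}^{n-1}\binom{n-1}{j}\,\frac{d^{n-1-j}}{dk^{n-1-j}}\bigl[(ik)^{-n}\bigr]\, \bigl(\widehat{u^{(n)}}\bigr)^{(j)}(k).
$$
A direct computation yields $\bigl|\tfrac{d^m}{dk^m}(ik)^{-n}\bigr| \leq C_m|k|^{-n-m}$, so with $m=n-1-j$ the $j$-th summand is bounded by $C|k|^{-2n+1+j}$. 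For $|k| \geq \kappa \in (0,1]$, all exponents $-2n+1+j$ ($0 \leq j \leq n-1$) are negative, so each term is maximized at $|k|=\kappa$, and the worst case $j=0$ produces the required estimate $\sup_{|k|\geq\kappa}|\hat u^{(n-1)}(k)| \leq C\kappa^{-2n+1}$.

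The main delicate point is the first step when $\alpha \in (0,1]$: then $u \notin L^1(\re)$, so $\hat u$ is only a tempered distribution. The identity $(ik)^n\hat u = \widehat{u^{(n)}}$ nevertheless holds in $\mathcal{S}'(\re)$, and inverting it on the open set $\re\setminus\{0\}$ is legitimate because multiplication by the smooth nowhere-vanishing function $(ik)^{-n}$ is a well-defined operation on distributions there. One concludes that $\hat u$ coincides on $\re\setminus\{0\}$ with the continuous function $(ik)^{-n}\widehat{u^{(n)}}(k)$, after which the remaining derivative counting and Leibniz manipulation are routine bookkeeping.
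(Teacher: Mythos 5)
Your proof is correct and follows essentially the same route as the paper: both write $\hat u(k)=k^{-n}\cdot\bigl(k^{n}\hat u(k)\bigr)$ with $k^{n}\hat u$ proportional to $\widehat{u^{(n)}}$, apply the Leibniz rule, bound the derivatives $\frac{d^{s}}{dk^{s}}(k^{n}\hat u(k))$ uniformly via the integrability of $x^{s}u^{(n)}(x)$ for $s\leq n-1$, and estimate the derivatives of $k^{-n}$ by $C|k|^{-n-m}$. Your extra care in justifying the distributional identity $(ik)^{n}\hat u=\widehat{u^{(n)}}$ when $\alpha\in(0,1]$ (where $u\notin L^{1}$) is a welcome refinement of a point the paper passes over silently, but it is not a different argument.
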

\begin{proof}
We have
\bel{450}
\hat{u}^{(n-1)}(k) = \sum_{s=0}^{n-1} \binom{n-1}{s} \frac{d^{n-1-s}}{dk^{n-1-s}}(k^{-n}) \frac{d^{s}}{dk^{s}}(k^n \hat{u}(k)), \quad k \in \re\setminus\{0\}.
 \ee
 Moreover,
 \bel{451a}
 \frac{d^{s}}{dk^{s}}(k^n \hat{u}(k)) = \frac{i^{n-s}}{(2\pi)^{1/2}} \int_\re e^{-ikx} x^s u^{(n)}(x) dx, \quad k \in \re.
 \ee
 Therefore, by \eqref{x8},
  \bel{451}
  \sup_{k \in \re} \left|\frac{d^{s}}{dk^{s}}(k^n \hat{u}(k))\right| \leq  \frac{c_n}{(2\pi)^{1/2}} \int_\re  |x|^s (1 + |x|)^{-\alpha -n} dx < \infty, \quad s=0,\ldots,n-1, \quad \alpha >0.
  \ee
  Now \eqref{450} -- \eqref{451} yield $\hat{u} \in C^{n-1}(\re\setminus\{0\})$ and estimate \eqref{449}.
\end{proof}
\subsection{Projection to the spectral edge of $H_\beta$}
\label{ss42}
In this section we fix the gap $\left({\mathcal E}_j^-, {\mathcal E}_j^+\right)$ in $\sigma(H_\beta)$, choose the edge ${\mathcal E}_j^-$ or ${\mathcal E}_j^+$ which is supposed to be regular, and restrict the analysis to the spectral subspace of the unperturbed operator $H_\beta$ which corresponds to a small vicinity of this edge. Set
$$
\pi^{\pm}_{j}(k) : = \left|\psi_j^{\pm}(\cdot;k)\rangle \langle \psi_j^{\pm}(\cdot;k)\right|, \quad k \in {\mathcal I}_j^{\pm},
$$
(see \eqref{x11} for the definition of $\psi_j^{\pm}$), and
$$
{\mathcal P}_j^{\pm} := \int_{{\mathcal I}_j^{\pm}}^{\oplus} \pi^{\pm}_{j}(k) dk,
\quad P_j^{\pm} : = \Phi^* {\mathcal P}_j^{\pm} \Phi, \quad Q_j^{\pm} : = I -  P_j^{\pm}.
$$
Thus, $P_j^{\pm}$ and $Q_j^{\pm}$ are orthogonal projections in ${\rm L}^2(\Omega)$. Since they commute with $H_\beta^{-1}$, they leave invariant
$D(H_\beta) = D(H_{\beta - \eps}) = {\rm H}^2(\Omega) \cap {\rm H}^1_0(\Omega)$.
Let us recall now that
\bel{x61}
H_{\beta-\eps} = H_\beta + 2\beta \eps \partial_\varphi^2 + 2 \eps \partial_\varphi \partial_3 - \eps^2 \partial_\varphi^2 + \eps'\partial_\varphi.
\ee
In particular, the perturbation $H_{\beta-\eps} - H_\beta$ is a second-order differential operator. The spectral properties for second-order localized perturbations of second-order elliptic operators were considered in \cite{AADH} in a different context.
Further, \eqref{x61} implies
    \bel{47}
    H_{\beta-\eps} = P_j^{\pm}  H_{\beta-\eps} P_j^{\pm} + Q_j^{\pm}  H_{\beta-\eps} Q_j^{\pm}
    +  \sum_{i=1}^4 \left(P_j^{\pm}  f_i L_i Q_j^{\pm} + Q_j^{\pm}  f_i L_i P_j^{\pm}\right)
    \ee
    where
    $$
    f_1 = 2\beta \eps, \quad f_2 = 2\eps, \quad f_3 = -\eps^2, \quad f_4 = \eps',
    $$
    $$
    L_1 = L_3 = \partial_\varphi^2, \quad L_2 = \partial_\varphi \partial_3, \quad L_4 = \partial_\varphi.
    $$
    Let us now write $I = H_\beta^2 H_\beta^{-2}$, then commute $f_i$ with appropriate powers of $H_\beta^{-1}$. Taking into account that
    $[f_i, H_\beta^{-1}] = H_\beta^{-1} (f_i'' + 2f_i' D) H_\beta^{-1} $ where $D : = \beta \partial_\varphi + \partial_3$, we find that
    \bel{47a}
    P_j^{\pm} f_i L_i Q_j^{\pm} = \sum_{n=0}^{n_i} \sum_{r=1}^{r_{n,i}} P_j^{\pm} H_\beta^n g_{i,n,r} H_\beta^{-1/2} K_{i,n,r} Q_j^{\pm}, \quad i=1,\ldots,4,
    \ee
    where $g_{i,n,r}$ are the multipliers by decaying functions of $x_3$, and $K_{i,n,r}$ are bounded operators in ${\rm L}^2(\Omega)$. Let us define explicitly the functions $g_{i,n,r}$ and the operators
    $K_{i,n,r}$. Fix $i=1,2,3$. Then $n_i=2$, $r_{0,i} =2$,  and
    $$
    g_{i,0,1} = f_i^{(iv)}, \quad   K_{i,0,1} = H_\beta^{-1/2}(H_\beta^{-1} - 4DH_\beta^{-1}DH_\beta^{-1}) L_i,
    $$
    $$
    g_{i,0,2} = f_i^{'''},  \quad K_{i,0,2} = 2H_\beta^{1/2} (DH_\beta^{-1} + H_\beta^{-1}D - 4 DH_\beta^{-1}DH_\beta^{-1}D) H_\beta^{-1}L_i,
    $$
    $r_{1,i} =2$,  and
    $$
    g_{i,1,1} = f_i^{''}, \quad K_{i,1,1} = -2 H_\beta^{-1/2}(I - 2 DH_\beta^{-1}D)H_\beta^{-1}L_i,
    $$
    $$
    \quad  g_{i,1,2} = f_i^{'},   \quad K_{i,1,2} = -2H_\beta^{1/2} (DH_\beta^{-1} + H_\beta^{-1}D) H_\beta^{-1}L_i,
    $$
    while $r_{2,i} =1$,  and
    $$
    g_{i,2,1} = f_i, \quad K_{i,2,1} = H_\beta^{-3/2} L_i.
    $$
    Finally, if $i=4$, then $n_4 = 1$, $r_{0,4} = 2$, and
     $$
    g_{4,0,1} = f_4^{'''}, \quad K_{4,0,1} = 2H_\beta^{-1/2}DH_\beta^{-1} L_4,
    $$
    $$
    g_{4,0,2} = f_4^{''},
       \quad
    K_{4,0,2} = -(H_\beta^{-1/2} - 4H_\beta^{1/2}DH_\beta^{-1}DH_\beta^{-1}) L_4,
    $$
    while $r_{1,4} =2$, and
    $$
    g_{4,1,1} = f_4', \quad K_{4,1,1} = -2H_\beta^{-1/2}D H_\beta^{-1} L_4, \quad g_{4,1,2} = f_4, \quad K_{4,1,2} = H_\beta^{-1/2} L_4. $$
    Hence, \eqref{47a} implies that for any $\nu \in (0,1)$ we have
     \bel{48}
     \sum_{i=1}^4 \left(P_j^{\pm}  f_i L_i Q_j^{\pm} + Q_j^{\pm}  f_i L_i P_j^{\pm}\right) = 2 {\rm Re} \sum_{i=1}^4 \sum_{n=0}^{n_i} \sum_{r=1}^{r_{n,i}} P_j^{\pm} H_\beta^n |g_{i,n,r}|^{\frac{1+\nu}{2}} S_{i,n,r}(\nu)  Q_j^{\pm},
    \ee
    where $S_{i,n,r}(\nu) : = {\rm sign} \, g_{i,n,r} |g_{i,n,r}|^{\frac{1-\nu}{2}} H_\beta^{-1/2} K_{i,n,r}$.
   \begin{lemma} \label{l401}
    Let $\partial \omega \in C^2$, $\beta \in C^4({\mathbb T};\re)$, $\eps \in {\mathcal S}_{4,\alpha}(\re)$, $\alpha > 0$.
    Then the operators $S_{i,n,r}(\nu)$ with $\nu \in (0,1)$ are compact in ${\rm L}^2(\Omega)$.
    \end{lemma}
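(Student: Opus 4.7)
The plan is to exhibit each $S_{i,n,r}(\nu)$, for $\nu\in(0,1)$, as a composition of a compact and a bounded operator on $L^2(\Omega)$. Two analytic inputs underpin the boundedness arguments: the form-domain bound $\|Du\|^2+\|\nabla_t u\|^2\le \|H_\beta^{1/2}u\|^2$ inherited from \eqref{q}, which yields the $L^2$-boundedness of $DH_\beta^{-1/2}$, $\partial_\varphi H_\beta^{-1/2}$ and of their adjoints; and the elliptic regularity $H_\beta^{-1}:L^2(\Omega)\to {\rm H}^2(\Omega)\cap {\rm H}_0^1(\Omega)$, valid under $\partial\omega\in C^2$ and $\beta\in C^4({\mathbb T})$, from which $H_\beta^{-1}L_i$ and $L_i^*H_\beta^{-1}$ are bounded on $L^2(\Omega)$ for each $L_i\in\{\partial_\varphi^2,\partial_\varphi\partial_3,\partial_\varphi\}$.

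The first step would be to verify that $H_\beta^{-1/2}K_{i,n,r}$ extends to a bounded operator from $L^2(\Omega)$ into the form domain $D(H_\beta^{1/2})={\rm H}_0^1(\Omega)$ for every $(i,n,r)$. When $r=1$ and for $K_{i,2,1}$, $K_{4,0,1}$, $K_{4,1,1}$, this is a direct assembly of the building blocks above, with at least one residual factor of $H_\beta^{-1/2}$ (or $H_\beta^{-1}$) supplying the Sobolev gain. The delicate cases are $(n,r)=(0,2),(1,2)$, where $K_{i,n,r}$ carries an unbounded leading factor $H_\beta^{1/2}$. There the identity $H_\beta^{-1/2}H_\beta^{1/2}=I$ on $D(H_\beta^{1/2})$, extended by density to $L^2$, reduces the product to a bounded expression; for instance,
$$
H_\beta^{-1/2}K_{i,0,2} \;=\; 2\,DH_\beta^{-2}L_i + 2\,H_\beta^{-1}DH_\beta^{-1}L_i - 8\,DH_\beta^{-1}DH_\beta^{-1}DH_\beta^{-1}L_i,
$$
in which each summand is a finite composition of the building blocks retaining one smoothing factor, and analogously for $K_{i,1,2}$.

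Next, since $\eps\in{\mathcal S}_{4,\alpha}(\re)$ and $\beta\in C^4({\mathbb T})$, every coefficient $g_{i,n,r}$ --- a polynomial expression in $\beta,\beta',\beta''$ and in derivatives of $\eps$ up to order four --- is bounded, continuous, and tends to $0$ as $|x_3|\to\infty$, so $|g_{i,n,r}|^{(1-\nu)/2}\in C_0(\overline\Omega)$. A Rellich truncation then closes the argument: for any $R>0$, the operator $\one_{\{|x_3|\le R\}}H_\beta^{-1/2}K_{i,n,r}$ maps $L^2(\Omega)$ into ${\rm H}^1(\omega\times(-R,R))$, which embeds compactly into $L^2(\omega\times(-R,R))$ by Rellich--Kondrachov, and is therefore compact on $L^2(\Omega)$; multiplying on the left by the bounded function $\mathrm{sign}(g_{i,n,r})\,|g_{i,n,r}|^{(1-\nu)/2}$ preserves compactness. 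Because
$$
\bigl\|(1-\one_{\{|x_3|\le R\}})|g_{i,n,r}|^{(1-\nu)/2}\bigr\|_{L^\infty(\re)}\longrightarrow 0,\qquad R\to\infty,
$$
the truncated operators converge to $S_{i,n,r}(\nu)$ in operator norm, and $S_{i,n,r}(\nu)$ is compact as a norm-limit of compacts.

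The principal technical obstacle lies in the exceptional cases $(n,r)=(0,2),(1,2)$: one must rigorously justify the cancellation $H_\beta^{-1/2}H_\beta^{1/2}=I$ (first on the form domain, then extend by density) and verify term by term that the resulting expansion genuinely gains one Sobolev derivative rather than merely being $L^2$-bounded. The other ingredients --- the form-domain bounds, elliptic regularity, Rellich--Kondrachov, and the closure of compact operators under operator-norm limits --- are entirely standard.
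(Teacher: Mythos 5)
Your argument is correct in substance, but for the central compactness step it takes a genuinely different route from the paper. The paper separates the weight from $K_{i,n,r}$ entirely: it treats the $K_{i,n,r}$ as bounded black boxes (this is simply asserted, not proved), writes $S_{i,n,r}(\nu)$ as $\bigl[|g_{i,n,r}|^{\frac{1-\nu}{2}}(1+|x_3|)^{\kappa}\bigr]\cdot\bigl[(1+|x_3|)^{-\kappa}H_\beta^{-1/2}\bigr]\cdot K_{i,n,r}$ with $\kappa=\alpha(1-\nu)/2$, and proves compactness of the middle factor by comparing $H_\beta$ with the straight operator $H_0$, expanding in the Dirichlet eigenfunctions of $-\Delta_t$ on $\omega$, and reducing to the orthogonal sum of the one-dimensional operators $(1+|x|)^{-\kappa}\bigl(-\frac{d^2}{dx^2}+\lambda_\ell\bigr)^{-1/2}$, each compact by the Birman--Solomyak criterion and with norms $\lambda_\ell^{-1/2}\to 0$. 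You instead load the smoothing onto $H_\beta^{-1/2}K_{i,n,r}:{\rm L}^2(\Omega)\to {\rm H}_0^1(\Omega)$ and get compactness from a truncation in $x_3$, Rellich--Kondrachov on $\omega\times(-R,R)$, and a norm limit driven by the decay of $|g_{i,n,r}|^{\frac{1-\nu}{2}}$. Both proofs rest on the same two ingredients (decay of the weight plus one derivative of smoothing), and both are standard; yours is more elementary and self-contained, while the paper's fibration argument avoids having to discuss the mapping properties of $H_\beta^{-1/2}K_{i,n,r}$ at all. The effort you invest in the ``delicate cases'' $(n,r)=(0,2),(1,2)$, where $K_{i,n,r}$ carries a leading $H_\beta^{1/2}$, is not wasted: performing the cancellation $H_\beta^{-1/2}H_\beta^{1/2}=I$ \emph{before} asking for boundedness, as you do, is arguably the safer reading of the construction, since the boundedness of $K_{i,n,r}$ on its own (which the paper takes for granted) involves expressions like $H_\beta^{1/2}DH_\beta^{-1}$ whose interpretation requires some care because $\partial_\varphi$ is not tangential to $\partial\omega$ in general. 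The one point you should make fully explicit is the term-by-term verification, which you correctly flag as the main technical obstacle, that each summand of the expanded $H_\beta^{-1/2}K_{i,n,r}$ lands in ${\rm H}^1(\Omega)$ and not merely in ${\rm L}^2(\Omega)$; with that done, your proof is complete.
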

    \begin{proof}
    Since the operators $K_{i,n,r}$ are bounded, and $\sup_{x \in \re} |g_{i,n,r}| (1 + |x|)^{\alpha} < \infty$, it suffices to show that the operator
    $(1 + |x_3|)^{-\kappa} H_\beta^{-1/2}$ with $\kappa = \alpha(1-\nu)/2 >0$
    is compact in ${\rm L}^2(\Omega)$. Evidently, the operator $H_0^{1/2} H_\beta^{-1/2}$ is bounded in ${\rm L}^2(\Omega)$, so that it suffices to prove that the operator $(1 + |x_3|)^{-\kappa} H_0^{-1/2}$
    is compact in ${\rm L}^2(\Omega)$. Expanding the function $u \in {\rm L}^2(\Omega)$ with respect to the eigenfunctions of the Dirichlet Laplacian $-\Delta_t$, self-adjoint in ${\rm L}^2(\omega)$, we find that $(1 + |x_3|)^{-\kappa} H_0^{-1/2}$ is unitarily equivalent to the orthogonal sum
    \bel{411}
    \bigoplus_{\ell \in {\mathbb N}}(1 + |x|)^{-\kappa} \left(-\frac{d^2}{dx^2} + \lambda_\ell\right)^{-1/2},
    \ee
    self-adjoint in $ \ell^2({\mathbb N};{\rm L}^2(\re))$; here, $\left\{\lambda_\ell\right\}_{\ell \in {\mathbb N}}$ is the non decreasing sequence of the eigenvalue of the Dirichlet Laplacian $-\Delta_t$. Since $\kappa > 0$, the operator $(1 + |x|)^{-\kappa} \left(-\frac{d^2}{dx^2} + \lambda_\ell\right)^{-1/2}$ with $\ell \in {\mathbb N}$ fixed, is compact in ${\rm L}^2(\re)$ by \cite[Theorem 13, Section 8, Chapter 11]{BSo2}. On the other hand,
    $$
    \left\|(1 + |x|)^{-\kappa} \left(-\frac{d^2}{dx^2} + \lambda_\ell\right)^{-1/2}\right\| \leq \lambda_\ell^{-1/2}, \quad \ell \in {\mathbb N},
    $$
    and $\lim_{\ell \to \infty} \lambda_\ell^{-1/2} = 0$. Therefore, the orthogonal sum in \eqref{411} is compact.
    \end{proof}
    \begin{lemma} \label{l402}
    Let $\omega$, $\beta$, and $\eps$ satisfy the hypotheses of Lemma \ref{l401}. Assume that ${\mathcal E}_j^-$, $j \geq 1$ (resp., ${\mathcal E}_j^+$, $j \geq 0$) is a lower (resp., upper) regular edge point of a gap in $\sigma(H_\beta)$. Then the operators $ |g_{i,n,r}|^{\frac{1+\nu}{2}} H_\beta^n P_j^{\pm}$ are compact in ${\rm L}^2(\Omega)$.
    \end{lemma}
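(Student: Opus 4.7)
The plan is to reduce the compactness assertion to the fact, already established during the proof of Lemma \ref{l401}, that for every $\kappa>0$ the operator $(1+|x_3|)^{-\kappa}H_\beta^{-1/2}$ is compact in ${\rm L}^2(\Omega)$, combined with the observation that $H_\beta^{s}P_j^{\pm}$ is a bounded operator in ${\rm L}^2(\Omega)$ for every real $s$. The latter follows because $P_j^{\pm}=\Phi^*{\mathcal P}_j^{\pm}\Phi$ is the spectral projection of $H_\beta$ associated with the image $E_{\ell(j)}^{\pm}(\overline{{\mathcal I}_j^{\pm}})$, which is a bounded subset of $\re$ by continuity of $E_{\ell(j)}^{\pm}$ on the closure of the bounded set ${\mathcal I}_j^{\pm}$; hence the restriction of $H_\beta$ to the range of $P_j^{\pm}$ is a bounded self-adjoint operator, and arbitrary real powers of it are bounded as well. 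In particular, the range of $P_j^{\pm}$ is contained in the domain of every power of $H_\beta$, so the factorisation
$$
|g_{i,n,r}|^{\frac{1+\nu}{2}}\,H_\beta^{n}\,P_j^{\pm}=\bigl(|g_{i,n,r}|^{\frac{1+\nu}{2}}\,H_\beta^{-1/2}\bigr)\bigl(H_\beta^{n+1/2}\,P_j^{\pm}\bigr)
$$
is well defined, and in view of the boundedness of its second factor it suffices to establish the compactness of the first factor.

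To do that, I would catalogue the explicit expressions $g_{i,n,r}\in\{f_i,f_i',f_i'',f_i''',f_i^{(iv)}\}$ with $f_1=2\beta\eps$, $f_2=2\eps$, $f_3=-\eps^2$, $f_4=\eps'$, and invoke $\beta\in C^4({\mathbb T})$ together with $\eps\in{\mathcal S}_{4,\alpha}(\re)$ (see definition \eqref{x8}) and the Leibniz rule to deduce a pointwise bound of the form $|g_{i,n,r}(x_3)|\le C(1+|x_3|)^{-\alpha_0}$ for some $\alpha_0>0$. Consequently, $|g_{i,n,r}(x_3)|^{\frac{1+\nu}{2}}\le C'(1+|x_3|)^{-\kappa}$ with $\kappa:=\alpha_0(1+\nu)/2>0$. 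Writing the multiplication operator by $|g_{i,n,r}|^{\frac{1+\nu}{2}}$ as the product of the bounded multiplier $|g_{i,n,r}|^{\frac{1+\nu}{2}}(1+|x_3|)^{\kappa}$ and the multiplier $(1+|x_3|)^{-\kappa}$, we obtain
$$
|g_{i,n,r}|^{\frac{1+\nu}{2}}\,H_\beta^{-1/2}=M_{i,n,r}\,\bigl((1+|x_3|)^{-\kappa}H_\beta^{-1/2}\bigr),
$$
where $M_{i,n,r}$ is bounded and the operator in parentheses is compact by the fibered argument given in the proof of Lemma \ref{l401}. Thus the first factor is compact, the second is bounded, and their composition is compact, which proves the lemma.

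I do not anticipate any serious obstacle. The only point requiring some care is the verification that each $g_{i,n,r}$ inherits a pointwise decay rate from $\eps\in{\mathcal S}_{4,\alpha}(\re)$, which is precisely what the $C^4$-smoothness hypothesis on $\beta$ and the weighted bounds on the derivatives of $\eps$ up to order four are designed to provide; note that the power $\tfrac{1+\nu}{2}$ with $\nu\in(0,1)$ leaves room for $\kappa$ to be strictly positive without any additional constraint on $\alpha$.
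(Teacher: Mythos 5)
Your proof is correct, but it takes a genuinely different route from the paper's. The paper conjugates by the Floquet transform $\Phi$ and reduces the claim to the compactness of the explicit operator $|g_{i,n,r}|^{\frac{1+\nu}{2}} G_j^{\pm}(E_j^{\pm})^n : {\rm L}^2({\mathcal I}_j^{\pm}) \to {\rm L}^2(\Omega)$, where $G_j^{\pm}$ has integral kernel $\psi_j^{\pm}(x_t,x_3;k)e^{ix_3 k}$; it first proves $\|G_j^{\pm}\| \leq 1$ by a Bessel-type argument on the Fourier coefficients of $\psi_j^{\pm}$ in $x_3$, and then cuts off in $x_3$ to exhibit the operator as a norm limit of Hilbert--Schmidt operators. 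You instead factor $|g_{i,n,r}|^{\frac{1+\nu}{2}} H_\beta^{n} P_j^{\pm} = \bigl(|g_{i,n,r}|^{\frac{1+\nu}{2}} H_\beta^{-1/2}\bigr)\bigl(H_\beta^{n+1/2} P_j^{\pm}\bigr)$ and reuse the compactness of $(1+|x_3|)^{-\kappa}H_\beta^{-1/2}$ already established inside the proof of Lemma \ref{l401}, which indeed holds for every $\kappa>0$ and not only for $\kappa=\alpha(1-\nu)/2$; this is legitimate and shorter. One terminological slip: $P_j^{\pm}$ is \emph{not} the spectral projection $\one_{E_{\ell(j)}^{\pm}(\overline{{\mathcal I}_j^{\pm}})}(H_\beta)$ --- other bands, or the same band at quasi-momenta outside ${\mathcal I}_j^{\pm}$, may contribute to that energy window --- but what you actually use is true: ${\mathcal P}_j^{\pm}$ is a direct integral of eigenprojections of the fibers $h_\beta(k)$, so $P_j^{\pm}$ reduces $H_\beta$, and the restriction of $H_\beta$ to ${\rm Ran}\,P_j^{\pm}$ is unitarily equivalent to multiplication by the bounded function $E_{\ell(j)}^{\pm}$ on ${\rm L}^2({\mathcal I}_j^{\pm})$, whence $H_\beta^{n+1/2}P_j^{\pm}$ is bounded. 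The trade-off is that the paper's kernel computation also delivers the quantitative bound $\|G_j^{\pm}\|\le 1$ and the Hilbert--Schmidt estimates that are recycled almost verbatim in the proof of Proposition \ref{p42} (compactness of $\rho_{-\nu}^{1/2}{\mathcal G}_{j,i}^{\pm}$); with your argument those ingredients would still have to be established separately there.
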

    \begin{proof}
    Since $ |g_{i,n,r}|^{\frac{1+\nu}{2}} H_\beta^n P_j^{\pm} =    |g_{i,n,r}|^{\frac{1+\nu}{2}} H_\beta^n \Phi^* {\mathcal P}_j^{\pm} \Phi$,
    it suffices to prove the compactness of
    \bel{412}
    |g_{i,n,r}|^{\frac{1+\nu}{2}} G_j^{\pm} (E_j^{\pm})^n : {\rm L}^2({\mathcal I}_j^{\pm}) \to {\rm L}^2(\Omega),
    \ee
    where $G_j^{\pm} : {\rm L}^2({\mathcal I}_j^{\pm}) \to {\rm L}^2(\Omega)$ is the operator with integral kernel
    $$
    \psi_j^{\pm}(x_t,x_3;k)e^{ix_3k}, \quad (x_t,x_3) \in \Omega, \quad k \in {\mathcal I}_j^{\pm}.
    $$
    Let us first prove that $G_j^{\pm}$ is bounded. To this end we will prove the boundedness of $(G_j^{\pm})^* : {\rm L}^2(\Omega) \to {\rm L}^2({\mathcal I}_j^{\pm})$; the argument is  similar to the proof of \cite[Lemma 3.1]{R}. We have
    $$
    ((G_j^{\pm})^*u)(k) = \int_\Omega  e^{-ix_3 k}\overline{\psi_j^{\pm}(x_t,x_3;k)} u(x_t, x_3) dx_t dx_3, \quad k  \in {\mathcal I}_j^{\pm}.
    $$
    Write $\psi_j^{\pm}(x_t,x_3;k)$ as a Fourier series with respect to $x_3$, i.e.
    $$
    \psi_j^{\pm}(x_t,x_3;k) = (2\pi)^{-1/2} \sum_{\ell \in {\mathbb Z}} \psi^{\pm}_{j,\ell}(x_t;k) e^{ix_3 \ell}
    $$
    with
    $$
    \sum_{\ell \in {\mathbb Z}} \int_\omega |\psi^{\pm}_{j,\ell}(x_t;k)|^2 dx_t = 1, \quad k \in {\mathcal I}_j^{\pm}.
    $$
    Then
    $$
    ((G_j^{\pm})^*u)(k) = \sum_{\ell \in {\mathbb Z}} \int_\omega \hat{u}(x_t,k+\ell) \overline{\psi^{\pm}_{j,\ell}(x_t)} dx_t,
    $$
    and, hence,
    $$
    \int_{{\mathcal I}_j^{\pm}} |((G_j^{\pm})^*u)(k)|^2 dk \leq
    \int_{{\mathcal I}_j^{\pm}} \left(\sum_{\ell \in {\mathbb Z}} \int_\omega |\hat{u}(x_t,k+\ell)|^2 dx_t\right)\left(\sum_{\ell \in {\mathbb Z}} \int_\omega |\psi^{\pm}_{j,\ell}(x_t;k)|^2 dx_t\right)dk \leq
    $$
    $$
    \sum_{\ell \in {\mathbb Z}} \int_{{\mathbb T}^*}  \int_\omega |\hat{u}(x_t,k+\ell)|^2 dx_t dk = \int_\re \int_\omega |\hat{u}(x_t,k)|^2 dx_t dk = \int_\Omega |u(\bx)|^2 d\bx
    $$
    which implies $\|(G_j^{\pm})^*\| =  \|G_j^{\pm}\| \leq 1$.
    Now fix $N \in {\mathbb N}$ and denote by $\chi_N$ the characteristic function of the interval $[-\pi N, \pi N]$. Write
    \bel{x26}
     |g_{i,n,r}|^{\frac{1+\nu}{2}} G_j^{\pm} (E_j^{\pm})^n = \chi_N(x_3) |g_{i,n,r}(x_3)|^{\frac{1+\nu}{2}} G_j^{\pm} (E_j^{\pm})^n + (1-\chi_N(x_3)) |g_{i,n,r}(x_3)|^{\frac{1+\nu}{2}} G_j^{\pm} (E_j^{\pm})^n.
    \ee
     We have
     $$
     \|\chi_N |g_{i,n,r}|^{\frac{1+\nu}{2}} G_j^{\pm} (E_j^{\pm})^n\|^2_{\rm HS} \leq C_1^2 N \int_\omega \int_{\mathbb T} \int_{{\mathcal I}_j^{\pm}} |\psi_j^{\pm}(x_t,x_3;k)|^2 dk dx_3 dx_t \leq C_1^2  N,
     $$
     where $\|\cdot\|_{\rm HS}$ denotes the Hilbert--Schmidt norm, and
     $$
     C_1 : = \sup_{x \in \re} |g_{i,n,r}(x)|^{\frac{1+\nu}{2}} \sup_{k \in {\mathcal I}_j^{\pm}} E_j^{\pm}(k)^n.
     $$
     Moreover,
$$
\|(1-\chi_N) |g_{i,n,r}|^{\frac{1+\nu}{2}} G_j^{\pm} (E_j^{\pm})^n\| \leq C_2 (1 + \pi N)^{-\frac{\alpha(1+\nu)}{2}} \|G_j^{\pm} \| \leq C_2 (1 + \pi N)^{-\frac{\alpha(1+\nu)}{2}}
$$
where
$$
C_2 : = \sup_{x \in \re} |(1+|x|)^{\alpha}g_{i,n,r}(x)|^{(1+\nu)/2} \sup_{k \in {\mathcal I}_j^{\pm}} E_j^{\pm}(k)^n.
 $$
 Thus, the operator $|g_{i,n,r}|^{\frac{1+\nu}{2}} G_j^{\pm} (E_j^{\pm})^n$ in \eqref{412} can be approximated in norm by compact operators, and hence it is compact itself.
\end{proof}
For $\nu \in (-1,1)$ set
    \bel{x50}
\rho_\nu(x_3) : = (1 + x_3^2)^{-\alpha(1+\nu)/2}, \quad x_3 \in \re.
    \ee
As usual, we will denote by the same symbol the multiplier by $\rho_\nu$, acting in ${\rm L}^2(\Omega)$ or in ${\rm L}^2(\re)$.
Now we are in position to prove the main result of this subsection.

\begin{pr} \label{p41} Under the hypotheses of Lemma \ref{l402}, there exists a $c_0 \geq 0$ independent of $\lambda$ such that for any $\nu \in (0,1)$ we have
    $$
    N_{(-\infty, -\lambda)}\left(P_j^-({\mathcal E}_j^- - H_{\beta-\eps} + c_0\sum_{n=0}^2 H_\beta^n \rho_\nu H_\beta^n)P_j^-\right) + O(1) \leq
    $$
    $$
    {\mathcal N}_j^-(\lambda) \leq
    $$
    \bel{426}
    N_{(-\infty, -\lambda)}\left(P_j^-({\mathcal E}_j^- - H_{\beta-\eps} - c_0\sum_{n=0}^2 H_\beta^n \rho_\nu H_\beta^n)P_j^-\right) + O(1),
    \ee
    or, respectively,
    $$
    N_{(-\infty, -\lambda)}\left(P_j^+(H_{\beta-\eps}-{\mathcal E}_j^+ + c_0\sum_{n=0}^2 H_\beta^n \rho_\nu H_\beta^n)P_j^+\right) + O(1) \leq
    $$
    $$
     {\mathcal N}_j^+(\lambda) \leq
    $$
    \bel{427}
    N_{(-\infty, -\lambda)}\left(P_j^+(H_{\beta-\eps}-{\mathcal E}_j^+ - c_0\sum_{n=0}^2 H_\beta^n \rho_\nu H_\beta^n)P_j^+\right) + O(1),
    \ee
    as $\lambda \downarrow 0$.
\end{pr}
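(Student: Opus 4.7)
The plan is to reduce $\mathcal{N}_j^{\pm}(\lambda)$ to an eigenvalue count on the single block $P_j^{\pm}(\cdot)P_j^{\pm}$ by rewriting $H_{\beta-\eps}$ in block form relative to the decomposition $I=P_j^{\pm}+Q_j^{\pm}$, absorbing the four off-diagonal cross terms in \eqref{47} into a positive quadratic form on the $P_j^{\pm}$-block plus a compact remainder on the $Q_j^{\pm}$-block, and then using that the $Q_j^{\pm}$-block has essential spectrum bounded away from ${\mathcal E}_j^{\pm}$ to show that it contributes only $O(1)$ eigenvalues.

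The absorption step starts from identity \eqref{48}, which exhibits every cross term as a finite sum of pieces $2\mathrm{Re}\bigl(P_j^{\pm}H_\beta^{n}|g_{i,n,r}|^{(1+\nu)/2}S_{i,n,r}(\nu)Q_j^{\pm}\bigr)$ with $n\in\{0,1,2\}$ and operators $S_{i,n,r}(\nu)$ compact in $\mathrm{L}^{2}(\Omega)$ by Lemma \ref{l401}. Applying the elementary form bound $\pm(A^{*}B+B^{*}A)\le\delta A^{*}A+\delta^{-1}B^{*}B$ with $A=|g_{i,n,r}|^{(1+\nu)/2}H_\beta^{n}P_j^{\pm}$ and $B=S_{i,n,r}(\nu)Q_j^{\pm}$, together with the pointwise estimate $|g_{i,n,r}(x_3)|^{1+\nu}\le C\rho_\nu(x_3)$ available since $\eps\in{\mathcal S}_{4,\alpha}(\re)$, and summing over the finitely many triples $(i,n,r)$, yields a constant $c_0\ge 0$ and a non-negative operator $T$ (relatively $H_\beta$-compact by Lemma \ref{l401}) such that
\begin{equation*}
H_{\beta-\eps}\le P_j^{\pm}\bigl(H_{\beta-\eps}+c_0\Sigma_\nu\bigr)P_j^{\pm}+Q_j^{\pm}(H_{\beta-\eps}+T)Q_j^{\pm},
\end{equation*}
with $\Sigma_\nu:=\sum_{n=0}^{2}H_\beta^{n}\rho_\nu H_\beta^{n}$, together with the opposite inequality obtained by replacing the $+$ signs in front of $c_0\Sigma_\nu$ and $T$ with $-$.

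To handle the $Q_j^{\pm}$-block I would invoke Definition \ref{d21}\,(i): since $E_{\ell(j)}^{\pm}$ is the unique band function reaching ${\mathcal E}_j^{\pm}$, the spectrum of $Q_j^{\pm}H_\beta Q_j^{\pm}|_{\mathrm{Ran}\,Q_j^{\pm}}$ lies at positive distance from ${\mathcal E}_j^{\pm}$. Since $(H_{\beta-\eps}-H_\beta)(H_\beta+1)^{-1}$ is compact thanks to the decay of $\eps,\eps',\eps^{2}$ in \eqref{x61} and $T$ is $H_\beta$-compact, the operator $Q_j^{\pm}(H_{\beta-\eps}\pm T)Q_j^{\pm}|_{\mathrm{Ran}\,Q_j^{\pm}}$ still has essential spectrum avoiding a neighbourhood of ${\mathcal E}_j^{\pm}$ and therefore contributes only $O(1)$ eigenvalues in $(\mathcal{E}_j^{-}+\lambda,\mathcal{E})$ or $(\mathcal{E},\mathcal{E}_j^{+}-\lambda)$. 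Combining this with the block-diagonal two-sided bound above via the mini-max principle, rewriting $N_{(\mathcal{E}_j^{-}+\lambda,\mathcal{E})}(\cdot)=N_{(-\infty,-\lambda)}({\mathcal E}_j^{-}-\cdot)$ on the spectral window of interest, and absorbing the $O(1)$ correction through Lemma \ref{l43}, one obtains \eqref{426} and \eqref{427}.

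The main obstacle I expect is bookkeeping rather than conceptual: after the rearrangement behind \eqref{47a}, which invokes $[f_i,H_\beta^{-1}]=H_\beta^{-1}(f_i''+2f_i'D)H_\beta^{-1}$ iteratively, one must check that every scalar residue $g_{i,n,r}$ inherits the decay $(1+|x_3|)^{-\alpha-k}$ (so that $|g_{i,n,r}|^{1+\nu}\lesssim\rho_\nu$ uniformly), and that each operator residue $K_{i,n,r}$ remains bounded on $\mathrm{L}^{2}(\Omega)$. Both properties are exactly what the hypotheses $\beta\in C^{4}(\mathbb{T})$ and $\eps\in{\mathcal S}_{4,\alpha}(\re)$ are tailored for; the parameter $\nu\in(0,1)$ must be strictly positive in order to preserve the compactness of the $Q_j^{\pm}$-side remainder coming from Lemma \ref{l401}, and yet close enough to zero so that the index $n=2$ term in $\Sigma_\nu$ is still dominated by $H_\beta^{2}$ on $\mathrm{Ran}\,P_j^{\pm}$.
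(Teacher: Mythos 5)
Your proposal is correct and follows essentially the same route as the paper: the paper realizes your termwise bound $\pm(A^*B+B^*A)\le A^*A+B^*B$ as the exact identities \eqref{424} with the compact non-negative correctors $C_{\gtrless}^{\pm}=(A_{\pm}^*\mp B_{\pm}^*)(A_{\pm}\mp B_{\pm})$, then disposes of them and of the $Q_j^{\pm}$-block via Lemma \ref{l43} and the compactness statements of Lemmas \ref{l401}--\ref{l402}, exactly as you outline, before dominating $P_j^{\pm}A_{\pm}^*A_{\pm}P_j^{\pm}$ by $c_0\sum_{n=0}^{2}H_\beta^n\rho_\nu H_\beta^n$. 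The only presentational difference is that the paper handles the passage from the block decomposition to the counting functions through the finite-rank/small-norm splitting of Lemma \ref{l43} rather than a bare mini-max argument, which is the careful way to compare eigenvalue counts in a bounded interval inside a spectral gap.
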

\begin{proof}
Introduce the operators $A_{\pm} = A_{\pm;j} : {\rm L}^2(\Omega) \to {\rm L}^2(\Omega; {\mathbb C}^{19})$ and $B_{\pm} = B_{\pm;j} : {\rm L}^2(\Omega) \to {\rm L}^2(\Omega; {\mathbb C}^{19})$ by
$$
A_{\pm}u = \left\{|g_{i,n,r}|^{\frac{1+\nu}{2}}H_\beta^n P_j^{\pm}u\right\}_{i=1,\ldots,4; n=0,\ldots n_i; r=1,\ldots,r_{n,i}},
 $$
 $$
 B_{\pm}u = \left\{S_{i,n,r} Q_j^{\pm}u\right\}_{i=1,\ldots,4; n=0,\ldots n_i; r=1,\ldots,r_{n,i}},
$$
for $u \in {\rm L}^2(\Omega)$. By Lemmas \ref{l401} and \ref{l402}, the operators $A_{\pm}$ and $B_{\pm}$ are compact. \\
Let us now prove \eqref{426}. Taking into account \eqref{47} and \eqref{48}, we easily find that
$$
H_{\beta-\eps} = P^-_j(H_{\beta-\eps} + A_-^* A_-)P^-_j + Q^-_j(H_{\beta-\eps} + B_-^* B_-)Q^-_j -C_>^- =
$$
\bel{424}
P^-_j(H_{\beta-\eps} - A_-^* A_-)P^-_j + Q^-_j(H_{\beta-\eps} - B_-^* B_-)Q^-_j + C_<^-
\ee
where
$$
C_>^- = (A_-^* - B_-^*)(A_- - B_-), \quad C_<^- = (A_-^* + B_-^*)(A_- + B_-).
$$
Evidently, the operators $C_>^-$ and $C_<^-$ are compact and non-negative. Applying Lemma \ref{l43}, we get
$$
N_{({\mathcal E}_j^- + \lambda, {\mathcal E}-s)}(P^-_j(H_{\beta-\eps} - A_-^* A_-)P^-_j) + N_{({\mathcal E}_j^- + \lambda, {\mathcal E}-s)}(Q^-_j(H_{\beta-\eps} - B_-^* B_-)Q^-_j) - n_+(s;C_<^-) \leq
$$
$$
{\mathcal N}_j^-(\lambda) \leq
$$
\bel{425}
N_{({\mathcal E}_j^- + \lambda, {\mathcal E}+s)}(P^-_j(H_{\beta-\eps} + A_-^* A_-)P^-_j) + N_{({\mathcal E}_j^- + \lambda, {\mathcal E}+s)}(Q^-_j(H_{\beta-\eps} + B_-^* B_-)Q^-_j) + n_+(s;C_>^-),
\ee
where $s \in (0, \min\left\{{\mathcal E}_j^+ - {\mathcal E}, {\mathcal E} - {\mathcal E}_j^-\right\})$ and $\lambda \in (0, {\mathcal E} -{\mathcal E}_j^- - s)$, while the operators $P^-_j(H_{\beta-\eps} \pm  A_-^* A_-)P^-_j$ (resp., $Q^-_j(H_{\beta-\eps} \pm B_-^* B_-)Q^-_j$) are considered as operators with domain $P^-_j D(H_\beta)$ (resp., $Q^-_j D(H_\beta)$), self-adjoint in the Hilbert space $P^-_j {\rm L}^2(\Omega)$ (resp., $Q^-_j {\rm L}^2(\Omega)$). Further, by construction, $[{\mathcal E}_j^-, {\mathcal E}_j^+) \cap \sigma(Q^-_j H_\beta Q^-_j) = \emptyset$.
Due to the compactness of the operators $H_{\beta-\eps} - H_\beta  \pm B_-^* B_-$, we have
$$
[{\mathcal E}_j^-, {\mathcal E}_j^+) \cap \sigma_{\rm ess}(Q^-_j (H_{\beta-\eps} \pm  B_-^* B_-)Q^-_j) = \emptyset,
$$
and, hence,
\bel{420}
N_{({\mathcal E}_j^- + \lambda, {\mathcal E}\pm s)}(Q^-_j(H_{\beta-\eps} + B_-^* B_-)Q^-_j) = O(1), \quad \lambda \downarrow 0.
\ee
Next,
$\sigma_{\rm ess}({\mathcal E}_j^- - H_{\beta-\eps} \mp A_-^*A_-) \subset [0, \infty)$. Therefore,
$$
N_{({\mathcal E}_j^- + \lambda, {\mathcal E}\pm s)}(P^-_j(H_{\beta-\eps} \pm A_-^* A_-)P^-_j) =
N_{({\mathcal E}_j^- - {\mathcal E}\mp s, -\lambda)}(P^-_j({\mathcal E}_j^-  - H_{\beta-\eps} \mp A_-^* A_-)P^-_j) =
$$
$$
N_{(-\infty, -\lambda)}(P^-_j({\mathcal E}_j^-  - H_{\beta-\eps} \mp A_-^* A_-)P^-_j) -
N_{(-\infty, {\mathcal E}_j^- - {\mathcal E}\mp s]}(P^-_j({\mathcal E}_j^-  - H_{\beta-\eps} \mp A_-^* A_-)P^-_j) =
$$
\bel{421}
N_{(-\infty, -\lambda)}(P^-_j({\mathcal E}_j^-  - H_{\beta-\eps} \mp A_-^* A_-)P^-_j) + O(1), \quad \lambda \downarrow 0.
\ee
It is easy to check that there exists a constant $c_0 > 0$ such that
$$
P^-_j A_-^* A_- P^-_j \leq  c_0  \sum_{n=0}^2 P^-_j H_\beta^n \rho_\nu H_\beta^n P^-_j.
$$
Therefore,
$$
N_{(-\infty, -\lambda)}(P^-_j({\mathcal E}_j^-  - H_{\beta-\eps} - A_-^* A_-)P^-_j) \leq
$$
\bel{428a}
N_{(-\infty, -\lambda)}\left(P^-_j({\mathcal E}_j^-  - H_{\beta-\eps} - c_0 \sum_{n=0}^2 H_\beta^n \rho_\nu H_\beta^n)P^-_j\right),
\ee
$$
N_{(-\infty, -\lambda)}(P^-_j({\mathcal E}_j^-  - H_{\beta-\eps} + A_-^* A_-)P^-_j) \geq
$$
\bel{428}
N_{(-\infty, -\lambda)}\left(P^-_j({\mathcal E}_j^-  - H_{\beta-\eps} + c_0 \sum_{n=0}^2 H_\beta^n \rho_\nu H_\beta^n)P^-_j\right).
\ee
Finally, due to the compactness of the operators $C^-_<$ and $C^-_>$, we have
    \bel{423}
n_+(s;C^-_<) < \infty, \quad n_+(s;C^-_>) < \infty, \quad s>0.
    \ee
    Putting together \eqref{425} -- \eqref{423}, we obtain \eqref{426}. The proof of \eqref{427} is quite similar, so that we omit the details, and just point out that the analogue of \eqref{424} is
    $$
H_{\beta-\eps} = P^-_j(H_{\beta-\eps} - A_+^* A_+)P^-_j + Q^-_j(H_{\beta-\eps} + B_+^* B_+)Q^-_j + C_>^+ =
$$
$$
P^-_j(H_{\beta-\eps} + A_+^* A_+)P^-_j + Q^-_j(H_{\beta-\eps} + B_+^* B_+)Q^-_j - C_<^+
$$
where
$$
C_>^+ = (A_+^* + B_+^*)(A_+ + B_+), \quad C_<^+ = (A_+^* - B_+^*)(A_+ + B_+),
$$
    while the analogue of \eqref{425} is
    $$
N_{({\mathcal E}+s,{\mathcal E}_j^+ - \lambda)}(P^+_j(H_{\beta-\eps} + A_+^* A_+)P^+_j) + N_{({\mathcal E}+s,{\mathcal E}_j^+ - \lambda)}(Q^+_j(H_{\beta-\eps} + B_-^* B_-)Q^+_j) - n_+(s;C_<^+) \leq
$$
$$
{\mathcal N}_j^+(\lambda) \leq
$$
$$
N_{({\mathcal E}-s,{\mathcal E}_j^+ - \lambda)}(P^+_j(H_{\beta-\eps} - A_+^* A_+)P^+_j) + N_{({\mathcal E}+s,{\mathcal E}_j^+ - \lambda)}(Q^+_j(H_{\beta-\eps} - B_-^* B_-)Q^+_j) + n_+(s;C_>^+).
$$
\end{proof}

\subsection{Reduction to a Schr\"odinger-type operator}
\label{ss43}
Introduce the unitary operators $U_j^{\pm}: {\rm L}^2({\mathcal I}_j^{\pm}) \to P_j^{\pm}{\rm L}^2(\Omega)$ which act on $f \in {\rm L}^2({\mathcal I}_j^{\pm})$ as follows
$$
(U_j^{\pm})f)(x_t,x_3) = (\Phi^*\tilde{f}_j^{\pm})(x_t,x_3), \quad (x_t,x_3) \in \Omega,
$$
$$
\tilde{f}_j^{\pm}(x_t,x_3;k) = \left\{
\begin{array} {l}
\psi_j^{\pm}(x_t,x_3;k) f(x) \quad {\rm if} \quad (x_t,x_3) \in \omega \times {\mathbb T}, \quad k \in {\mathcal I}_j^{\pm},\\
0 \quad {\rm if} \quad (x_t,x_3) \in \omega \times {\mathbb T}, \quad k \in {\mathbb T}^* \setminus {\mathcal I}_j^{\pm}.
\end{array}
\right.
$$
Further, define $\Gamma^{\pm}_{j,\ell}: {\rm L}^2({\mathcal I}_j^{\pm}) \to {\rm L}^2(\Omega)$, $\ell = 0,\ldots,4$, as the operators with integral kernels
$$
e^{ix_3k}\gamma^{\pm}_{j,\ell}(x_t,x_3;k), \quad (x_t,x_3) \in \Omega, \quad k \in {\mathcal I}_j^{\pm},
$$
where
$$
\gamma_{j,0}^{\pm}(x_t,x_3;k) : = \partial_\varphi \psi_j^{\pm}(x_t,x_3;k),
$$
$$
\gamma_{j,1}^{\pm}(x_t,x_3;k) : = (\beta \partial_\varphi +\partial_3 + ik)\psi_j^{\pm}(x_t,x_3;k),
$$
$$
\gamma_{j,2+n}^{\pm}(x_t,x_3;k) : = \psi_j^{\pm}(x_t,x_3;k)(E_{\ell(j)}^\pm(k))^n, \quad n=0,1,2.
$$
Set
$$
T_{j,1}^{\pm}(c) : = \pm E_j^{\pm} \mp {\mathcal E}_j^{\pm} \mp  2{\rm Re} (\Gamma^{\pm}_{j,0})^* \eps \Gamma^{\pm}_{j,1}
\pm (\Gamma^{\pm}_{j,0})^* \eps^2 \Gamma^{\pm}_{j,0} - c \sum_{\ell=2}^4 (\Gamma^{\pm}_{j,\ell})^* \rho_\nu \Gamma^{\pm}_{j,\ell}, \quad c \in \re.
$$
{\em Remark}: If $\phi: \re \to {\mathbb C}$ is in a suitable class, then the operator $(\Gamma^\pm_{j,\ell})^* \phi \Gamma^\pm_{j,m} : L^2({\mathcal I}_j^{\pm}) \to L^2({\mathcal I}_j^{\pm})$ admits interpretation as a pseudodifferential operator ($\Psi$DO) with amplitude
$$
A(k,k'; x) : = 2\pi \phi(-x) \int_\omega \overline{\gamma_{j,\ell}^{\pm}(x_t,-x;k)} \gamma_{j,m}^{\pm}(x_t,-x;k') dx_t, \quad k,k' \in {\mathcal I}_j^{\pm},
\quad x \in \re,
$$
(see e.g. \cite[Eq. (23.8), Chapter IV]{shu}), i.e. as an integral operator with kernel
$$
\frac{1}{2\pi} \int_\re A(k,k'; x)e^{i(k-k')x} dx;
$$
note that here $k$ plays the role of the ``coordinate variable" while $x$ plays the role of the ``momentum variable". Even though we are in the simple situation where the underlying domain ${\mathcal I}_j^{\pm}$ is just a finite union of bounded intervals, some of the following arguments will be inspired by the general theory of $\Psi$DOs.\\

It is straightforward to check that

$$
P_j^{\pm}(\pm H_{\beta-\eps} \mp {\mathcal E}_j^{\pm}   - c \sum_{n=0}^2 H_\beta^n \rho_\nu H_\beta^n)P_j^{\pm} = U_j^{\pm} \, T_{j,1}^{\pm}(c)\,(U_j^{\pm})^*, \quad c \in \re.
$$
Therefore,
\bel{430}
N_{(-\infty, -\lambda)}(P_j^{\pm}(\pm H_{\beta-\eps} \mp {\mathcal E}_j^{\pm}- c \sum_{n=0}^2 H_\beta^n \rho_\nu H_\beta^n)P_j^{\pm}) = N_{(-\infty, -\lambda)}(T_{j,1}^{\pm}(c)), \quad \lambda >0, \quad c \in \re.
\ee
Further, introduce the multipliers
\bel{x80}
a_j^{\pm}(\lambda) : = \left(\pm E_{\ell(j)}^{\pm} \mp {\mathcal E}_j^{\pm}+\lambda\right)^{-1/2}, \quad \lambda > 0,
\ee
as well as the operators
 \bel{x20}
T^{\pm}_{j,2}(\lambda; c) : =  a_j^{\pm}(\lambda) \left(\pm  2{\rm Re} (\Gamma^{\pm}_{j,0})^* \eps \Gamma^{\pm}_{j,1}
\mp (\Gamma^{\pm}_{j,0})^* \eps^2 \Gamma^{\pm}_{j,0} + c \sum_{\ell =2}^4(\Gamma^{\pm}_{j,\ell})^* \rho_\nu \Gamma^{\pm}_{j,\ell}\right)a_j^{\pm}(\lambda),
 \ee
compact and self-adjoint in ${\rm L}^2({\mathcal I}_j^{\pm})$. Applying the Birman--Schwinger principle (see Lemma \ref{l41}), we get
    \bel{432}
    N_{(-\infty, -\lambda)}(T_{j,1}^{\pm}(c)) = n_+(1; T_{j,2}^{\pm}(\lambda; c)), \quad \lambda >0, \quad c \in \re.
    \ee
    Our next goal is to show that if we replace on the intervals ${\mathcal I}_{j,m}$, $m=1,\ldots,M_j^{\pm}$, the functions $\gamma_{j,i}(x_t,x_3,k)$ by their values at $k=k_{j,m}^\pm$, as well as  the functions  $E_{\ell(j)}^\pm(k) \mp {\mathcal E}_j^\pm$ (see \eqref{x80}) by their main asymptotic terms
    $\mu_{j,m}^\pm(k-k_{j,m}^\pm)^2$ as $k \to k_{j,m}^\pm$, we will make a negligible error in the asymptotic analysis of ${\mathcal N}_j^\pm(\lambda)$ as $\lambda \downarrow 0$. To this end,
     we define $\tilde{\Gamma}^{\pm}_{j,\ell}: {\rm L}^2({\mathcal I}_j^{\pm}) \to {\rm L}^2(\Omega)$, $\ell = 0,\ldots,4$, as the integral operators with integral kernels
$e^{ix_3k} \tilde{\gamma}^{\pm}_{j,\ell}(x_t,x_3;k)$, $(x_t,x_3) \in \Omega$, $k \in {\mathcal I}_j^{\pm}$,
where
$$
\tilde{\gamma}^{\pm}_{j,\ell}(x_t,x_3;k) = \sum_{m=1}^{M_j^{\pm}}\gamma^{\pm}_{j,\ell}(x_t,x_3;k_{j,m}^{\pm}) \chi^{\pm}_{j,m}(k),
$$
and $\chi^{\pm}_{j,m}$ is the characteristic function of the interval ${\mathcal I}_{j,m}^{\pm} = (k_{j,m}^{\pm}-\delta, k_{j,m}^{\pm}+\delta)$. Denote by
$\tilde{a}^{\pm}_j(\lambda)$, $\lambda>0$, the multiplier by
$$
\sum_{m=1}^{M_j^{\pm}}\left(\mu^{\pm}_{j,m}(k-k_{j,m}^{\pm})^2 + \lambda\right)^{-1/2} \chi^{\pm}_{j,m}(k), \quad k \in {\mathcal I}_j^{\pm},
$$
the quantities $\mu^{\pm}_{j,m}$ being introduced in \eqref{x25}. Define the operators
$$
\tilde{T}^{\pm}_{j,2}(\lambda; c) : =  \tilde{a}_j^{\pm}(\lambda) \left(\pm  2{\rm Re} (\tilde{\Gamma}^{\pm}_{j,0})^* \eps \tilde{\Gamma}^{\pm}_{j,1}
 + c \sum_{i=0}^4(\tilde{\Gamma}^{\pm}_{j,i})^* \rho_\nu \tilde{\Gamma}^{\pm}_{j,i}\right)\tilde{a}_j^{\pm}(\lambda), \quad \lambda>0, \quad c \in \re,
$$
compact and self-adjoint in ${\rm L}^2({\mathcal I}_j^{\pm})$.

 \begin{pr} \label{p42}
 Under the hypotheses of Lemma \ref{l402}, for any $c_0 \in \re$ there exists a constant $c_1 \geq 0$ independent of $\lambda$ such that for any $\nu \in (0,1)$, and $s\in (0,1)$, we have
 \bel{434}
 n_+(1+s; \tilde{T}_{j,2}^{\pm}(\lambda; -c_1)) + O_s(1) \leq n_+(1; T_{j,2}^{\pm}(\lambda; c_0)),
 \ee
 \bel{435}
  n_+(1; T_{j,2}^{\pm}(\lambda; c_0)) \leq n_+(1-s; \tilde{T}_{j,2}^{\pm}(\lambda; c_1)) + O_s(1) ,
 \ee
 as $\lambda \downarrow 0$.
 \end{pr}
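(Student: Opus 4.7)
The plan is to combine the Weyl inequalities \eqref{41} with suitable operator-inequality estimates. For \eqref{435}, I would seek a constant $c_1>0$ (depending on $c_0$) and a non-negative compact operator $R^{\pm,+}_\lambda$ on ${\rm L}^2(\mathcal I_j^\pm)$ obeying
$$
T_{j,2}^{\pm}(\lambda; c_0) \leq \tilde T_{j,2}^{\pm}(\lambda; c_1) + R^{\pm,+}_\lambda, \qquad n_+(s; R^{\pm,+}_\lambda) \leq C_s,
$$
uniformly in $\lambda \in (0, \lambda_0]$. Then $n_+(1;T_{j,2}^\pm(\lambda;c_0)) \leq n_+(1-s;\tilde T^{\pm}_{j,2}(\lambda;c_1)) + C_s$ follows at once from \eqref{41}. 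The bound \eqref{434} is obtained analogously by exchanging the roles of the two operators: one seeks $R^{\pm,-}_\lambda \geq 0$ with $n_+(s;R^{\pm,-}_\lambda)=O_s(1)$ and $\tilde T_{j,2}^{\pm}(\lambda;-c_1) \leq T_{j,2}^\pm(\lambda;c_0) + R^{\pm,-}_\lambda$, and applies \eqref{41} again.

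The discrepancy $T_{j,2}^\pm(\lambda;c_0) - \tilde T_{j,2}^\pm(\lambda;c_1)$ naturally splits into four structural pieces: (a) the $\eps^2$-contribution $\mp a_j^\pm(\lambda)(\Gamma^\pm_{j,0})^*\eps^2\Gamma^\pm_{j,0} a_j^\pm(\lambda)$, absent from $\tilde T$; (b) the error generated by replacing $a_j^\pm(\lambda)$ by $\tilde a_j^\pm(\lambda)$, both on and off $\mathcal I_j^\pm$; (c) the error generated by replacing $\Gamma^\pm_{j,\ell}$ by $\tilde\Gamma^\pm_{j,\ell}$ on $\mathcal I_j^\pm$; and (d) the mismatch between the two regularization sums $c_0\sum_{\ell=2}^{4}(\Gamma^\pm_{j,\ell})^*\rho_\nu\Gamma^\pm_{j,\ell}$ and $c_1\sum_{i=0}^{4}(\tilde\Gamma^\pm_{j,i})^*\rho_\nu\tilde\Gamma^\pm_{j,i}$.

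For (c), the real-analyticity of $k\mapsto\psi_j^\pm(\cdot;k)$ on $\overline{\mathcal I_j^\pm}$ (see \eqref{x11}) gives $\|\gamma^\pm_{j,\ell}(\cdot,\cdot;k)-\tilde\gamma^\pm_{j,\ell}(\cdot,\cdot;k)\|_{{\rm L}^2(\omega\times\mathbb T)} \leq C|k-k_{j,m}^\pm|$ on each $\mathcal I_{j,m}^\pm$; combined with the elementary bound $\tilde a_j^\pm(\lambda)(k) \leq (\mu_{j,m}^\pm)^{-1/2}|k-k_{j,m}^\pm|^{-1}$, this produces an operator whose singular-value distribution on ${\rm L}^2(\mathcal I_j^\pm)$ is controlled uniformly in $\lambda$ by a Hilbert--Schmidt estimate. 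For (b), the analyticity of $E_{\ell(j)}^\pm$ and the definition \eqref{x25} of $\mu_{j,m}^\pm$ give $E^\pm_{\ell(j)}(k)\mp \mathcal E_j^\pm - \mu_{j,m}^\pm(k-k_{j,m}^\pm)^2 = O(|k-k_{j,m}^\pm|^3)$, hence $|a_j^\pm(\lambda)(k)-\tilde a_j^\pm(\lambda)(k)| \leq C|k-k_{j,m}^\pm|\,\tilde a_j^\pm(\lambda)(k)$ on $\mathcal I_{j,m}^\pm$, and the same cancellation of the $\lambda$-singularity applies; off $\mathcal I_j^\pm$, $a_j^\pm(\lambda)$ is uniformly bounded in $\lambda$, so Lemma \ref{l401} (applied through the $S_{i,n,r}$ construction) yields a compact contribution with finite $n_+(s;\cdot)$. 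For (a), since $\eps^2 \in \mathcal S_{4,2\alpha}(\re)$ decays strictly faster than $\rho_\nu$ for $\nu<1$ close to $1$, we have $\eps^2 \leq C\rho_\nu$ pointwise; the resulting operator is therefore dominated in the operator sense by $c_1(\tilde\Gamma^\pm_{j,0})^*\rho_\nu\tilde\Gamma^\pm_{j,0}$ for $c_1$ sufficiently large, modulo a $\Gamma-\tilde\Gamma$ remainder already handled in (c). For (d), the extra non-negative terms $c_1(\tilde\Gamma^\pm_{j,0})^*\rho_\nu\tilde\Gamma^\pm_{j,0}+c_1(\tilde\Gamma^\pm_{j,1})^*\rho_\nu\tilde\Gamma^\pm_{j,1}$ present on the $\tilde T$-side provide precisely the extra room to absorb (a), (b), (c) once $c_1$ is chosen large enough, while the analogue for $\tilde T(-c_1)$ just reverses signs and the same estimates apply to $R^{\pm,-}_\lambda$.

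The principal obstacle is the careful bookkeeping near each extremal point $k_{j,m}^\pm$, where $a_j^\pm(\lambda)$ blows up as $\lambda\downarrow 0$: each individual error factor must produce exactly enough vanishing in $|k-k_{j,m}^\pm|$ to neutralise this blow-up and keep the resulting amplitudes bounded uniformly in $\lambda$. Once this is done, the errors fall into a class of operators (Hilbert--Schmidt on ${\rm L}^2(\mathcal I_j^\pm)$, or pseudodifferential of the type described in the Remark following the definition of $T^\pm_{j,1}$, with amplitudes bounded uniformly in $\lambda$ on a bounded domain) whose positive eigenvalue counting function above any $s>0$ is uniformly finite, yielding $n_\pm(s; R^{\pm,\cdot}_\lambda) = O_s(1)$ as required.
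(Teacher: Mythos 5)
Your proposal is correct and follows essentially the same route as the paper: an operator-inequality sandwich of $T_{j,2}^{\pm}(\lambda;c_0)$ between $\tilde T_{j,2}^{\pm}(\lambda;\mp c_1)$ plus error terms, combined with the Weyl and Ky Fan inequalities, where the decisive point is exactly the cancellation $|k-k_{j,m}^{\pm}|\,\tilde a_{j}^{\pm}(\lambda)(k)\leq(\mu_{j,m}^{\pm})^{-1/2}$ uniformly in $\lambda$, and the extra $i=0,1$ terms in the regularization sum absorb the $\eps^2$ contribution and the $\Gamma$-replacement errors. The only cosmetic difference is that the paper handles the substitution $a_j^{\pm}\to\tilde a_j^{\pm}$ multiplicatively, via $a_j^{\pm}(\lambda)\geq(1-r)\tilde a_j^{\pm}(\lambda)$ after shrinking $\delta$, rather than through your (equally valid) additive Taylor bound, and it packages the remaining errors as singular values of the $\lambda$-independent compact operators $\rho_{-\nu}^{1/2}{\mathcal G}_{j,i}^{\pm}$.
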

 \begin{proof}
 For definiteness, let us prove \eqref{435}. It is easy to see that for any given $c_0 \in \re$ there exist constants $c_1, c_2 >0$ such that
 $$
 \pm  2{\rm Re} (\Gamma^{\pm}_{j,0})^* \eps \Gamma^{\pm}_{j,1}
\mp (\Gamma^{\pm}_{j,0})^* \eps^2 \Gamma^{\pm}_{j,0} + c_0 \sum_{\ell=2}^4(\Gamma^{\pm}_{j,\ell})^* \rho_\nu \Gamma^{\pm}_{j,\ell} \leq
$$
 \bel{436}
\pm 2{\rm Re} (\tilde{\Gamma}^{\pm}_{j,0})^* \eps \tilde{\Gamma}^{\pm}_{j,1}
 + c_1 \sum_{i=0}^4(\tilde{\Gamma}^{\pm}_{j,i})^* \rho_\nu \tilde{\Gamma}^{\pm}_{j,i} + c_2 \sum_{i=0}^4\left(\tilde{\Gamma}^{\pm}_{j,i}-\Gamma^{\pm}_{j,i}\right)^* \rho_{-\nu} \left(\tilde{\Gamma}^{\pm}_{j,i}-\Gamma^{\pm}_{j,i}\right) .
 \ee
 For a given $r \in (0,1)$, pick a $\delta > 0$, the semi-length of the intervals ${\mathcal I}_{j,m}^{\pm}$, so small that for each $\lambda > 0$ we have
 \bel{437}
 a_j^{\pm}(\lambda) \geq (1-r) \tilde{a}_j^{\pm}(\lambda)
 \ee
 on ${\mathcal I}_{j}^{\pm}$.
 Estimates \eqref{436}  -- \eqref{437}, the mini-max principle,  the Weyl inequalities \eqref{41}, identity \eqref{42}, and the Ky Fan inequalities \eqref{43} now imply
$$
n_+(1; T_{j,2}^{\pm}(\lambda; c_0)) \leq
$$
\bel{438}
n_+((1-r)^3; \tilde{T}_{j,2}^{\pm}(\lambda; c_1)) + \sum_{i=0}^4 n_*((1-r)(r/c_2)^{1/2}/5; \rho_{-\nu}^{1/2} \left(\tilde{\Gamma}^{\pm}_{j,i}-\Gamma^{\pm}_{j,i}\right)\tilde{a}_j^{\pm}(\lambda)).
\ee
Define ${\mathcal G}_{j,i}^{\pm} : {\rm L}^2({\mathcal I}_j^{\pm}) \to {\rm L}^2(\Omega)$, $i=0,\ldots,4$, as the  operator with kernel
$$
e^{ix_3k} \sum_{m=1}^{M_j^\pm} \frac{\gamma_{j,i}^\pm (\bx , k) - \gamma_{j,i}^\pm(\bx , k_{j,m}^{\pm})}{k-k_{j,m}^{\pm}} \chi_{j,m}^{\pm}(k), \quad k \in {\mathcal I}_j^{\pm}, \quad \bx = (x_t,x_3) \in \Omega. $$
Since
$$
|k-k_{j,m}^{\pm}| (\left(\mu^{\pm}_{j,m}(k-k_{j,m}^{\pm})^2 + \lambda\right)^{-1/2}\leq (\mu^{\pm}_{j,m})^{-1/2}, \quad k \in {\mathcal I}_{j,m}^{\pm}, \quad \lambda >0,
$$
we have
\bel{439a}
n_*(r; \rho_{-\nu}^{1/2} \left(\tilde{\Gamma}^{\pm}_{j,i}-\Gamma^{\pm}_{j,i}\right)\tilde{a}^{\pm}(\lambda)) \leq n_*(r(\mu^{\pm}_{j,m})^{1/2}; \rho_{-\nu}^{1/2} {\mathcal G}_{j,i}^{\pm}), \quad r>0, \quad \lambda>0.
\ee
Let us prove that the operators $\rho_{-\nu}^{1/2} {\mathcal G}_{j,i}^{\pm} :{\rm L}^2({\mathcal I}_j^{\pm}) \to {\rm L}^2(\Omega)$ are compact, arguing as in the proof of Lemma \ref{l402}. By analogy with \eqref{x26}, write
$$
\rho_{-\nu}^{1/2} {\mathcal G}_{j,i}^{\pm} = \chi_N\rho_{-\nu}^{1/2} {\mathcal G}_{j,i}^{\pm} + (1-\chi_N)\rho_{-\nu}^{1/2} {\mathcal G}_{j,i}^{\pm}.
$$
It is easy to check that
$$
\|\chi_N\rho_{-\nu}^{1/2} {\mathcal G}_{j,i}^{\pm}\|_{\rm HS}^2 \leq 2N\sup_{k \in {\mathcal I}_j^{\pm}}\int_{\omega}\int_{\mathbb T}|\partial_k \gamma^\pm_{j,i}(\bx, k)|^2 d\bx,
$$
$$
\|\chi_N\rho_{-\nu}^{1/2} {\mathcal G}_{j,i}^{\pm}\|^2 \leq (1+\pi N)^{-\alpha(1-\nu)}\sup_{k \in {\mathcal I}_j^{\pm}}\int_{\omega}\int_{\mathbb T}|\partial_k \gamma_{j,i}^\pm(\bx, k)|^2 d\bx,
$$
which implies the compactness of the operators $\rho_{-\nu} {\mathcal G}_{j,i}^{\pm}$; in particular,  we have
\bel{439}
n_*(s; \rho_{-\nu}^{1/2} {\mathcal G}_{j,i}^{\pm}) < \infty, \quad s>0.
\ee
Combining \eqref{438} -- \eqref{439}, we get \eqref{435}. The proof of \eqref{434} is  analogous.
\end{proof}
Next, define the unitary operator ${\mathcal W} : {\rm L}^2({\mathcal I}_j^{\pm}) \to {\rm L}^2((-\delta,\delta); {\mathbb C}^{M_j^{\pm}})$
by
$$
({\mathcal W}u)_m(k) : = u(k+k_{j,m}^{\pm}),  \quad k \in (-\delta, \delta), \quad m=1,\ldots,M_j^{\pm},
$$
for $u \in {\rm L}^2({\mathcal I}_j^{\pm})$. Set
$$
\eta_{j;m,n}^{\pm}(x_3) : = \int_\omega \overline{\gamma_{j,0}^{\pm}(x_t,x_3;k_{j,m}^{\pm})} \gamma_{j,1}^{\pm}(x_3, x_t;k_{j,n}^{\pm})dx_t =
$$
$$
\int_\omega \overline{\partial_\varphi \psi_{j}^{\pm}(x_t,x_3;k_{j,m}^{\pm})} (\beta(x_3)\partial_{\varphi} + \partial_3 + ik_{j,n}^{\pm})\psi_{j}^{\pm}(x_3, x_t;k_{j,n}^{\pm})dx_t,
$$
and
$$
\zeta_{j;m,n}^{\pm}(x_3) : = \sum_{i=0}^4 \int_\omega \overline{\gamma_{j,i}^{\pm}(x_t,x_3;k_{j,m}^{\pm})} \gamma_{j,i}^{\pm}(x_3, x_t;k_{j,n}^{\pm})dx_t, \quad x_3 \in {\mathbb T}, \quad m,n,=1,\ldots,M_j^{\pm};
$$
thus $2{\rm Re}\,\eta_{j;m,m}^{\pm}(x_3)$ coincides with function $\eta_{j;m}^{\pm}$ defined in \eqref{x1}.
Let $$T_{j,3}^{\pm}(\lambda,c) : {\rm L}^2((-\delta,\delta); {\mathbb C}^{M_j^{\pm}}) \to {\rm L}^2((-\delta,\delta); {\mathbb C}^{M_j^{\pm}})$$ be the operators with matrix-valued
integral kernels
\bel{442}
{\mathcal T}_j^{\pm}(k,k';\lambda,c) : = \left\{{\mathcal T}_{j;m,n}^{\pm}(k,k';\lambda,c)\right\}_{m,n=1}^{M_j^\pm}, \quad k,k' \in (-\delta,\delta),
\ee
with
$$
{\mathcal T}_{j;m,n}^{\pm}(k,k';\lambda,c) : =
$$
$$
\sqrt{2\pi} \tilde{a}_{j,m}(k;\lambda)\left(\mathcal{F}{(\pm\eps (\eta_{j;m,n}^{\pm} + \overline{\eta^{\pm}_{j;n,m}}) + c\rho_\nu \zeta_{j;m,n}^{\pm})}\right)(k-k'+k_{j,m}^{\pm}-k_{j,n}^{\pm})\tilde{a}_{j,n}(k';\lambda),
$$
where, as indicated in \eqref{x27}, $\mathcal{F}{(\pm\eps (\eta_{j;m,n}^{\pm} + \overline{\eta^{\pm}_{j;n,m}})+ c\rho_\nu \zeta_{j;m,n}^{\pm})}$ is the Fourier transform of the function $\pm\eps (\eta_{j;m,n}^{\pm} + \overline{\eta^{\pm}_{j;n,m}}) + c\rho_\nu \zeta_{j;m,n}^{\pm}$, and
$$
 \tilde{a}_{j,m}(k;\lambda) : = \left(\mu_{j,m}^{\pm}k^2 + \lambda\right)^{\-1/2}, \quad m=1,\ldots,m_j^{\pm}, \ldots k \in (-\delta, \delta).
 $$
 Then we have
 $$
 \tilde{T}^{\pm}_{j,2}(\lambda; c) = {\mathcal W}^* T^{\pm}_{j,3}(\lambda; c) {\mathcal W};
 $$
 in particular,
 \bel{x32}
 n_+(s;  \tilde{T}^{\pm}_{j,2}(\lambda; c)) =  n_+(s;T^{\pm}_{j,3}(\lambda; c)), \quad \lambda > 0, \quad c \in \re.
    \ee
 Our next goal is to show that if we omit the off-diagonal part of \eqref{442}, and replace in its diagonal part the functions $\eta_{j,m}^{\rm} : = 2{\rm Re} \eta_{j,m,m}^{\pm}$ and $\zeta_{j,m}^{\pm} : = \zeta_{j;m,m}$ by their mean values, we will make a negligible error in the asymptotic analysis of ${\mathcal N}_j^{\pm}(\lambda)$ as $\lambda \downarrow 0$.
 Let $t_{j,m,1}^{\pm}(\lambda,c) : {\rm L}^2(-\delta,\delta) \to {\rm L}^2(-\delta,\delta)$, $m=1,\ldots, M_j^{\pm}$, be the operators with integral kernels
 \bel{460}
 \tau_{j,m}^{\pm}(k,k´;\lambda,c) : = \sqrt{2\pi} \tilde{a}_{j,m}(k;\lambda)\left(\pm \langle \eta_{j,m}^{\pm}\rangle \hat{\eps}+ c \langle \zeta_{j,m}^{\pm}\rangle \hat{\rho_\nu}\right)(k-k') \tilde{a}_{j,m}(k';\lambda), \quad k,k' \in (-\delta,\delta).
 \ee
 \begin{pr} \label{p452}
 Under the hypotheses of Lemma \ref{l402}, for each $s > 0$, $r \in (0,1)$, and $c \in \re$, we have
 $$
\sum_{m=1}^{M_j^\pm} n_+(s(1+r); t_{j,m,1}^{\pm}(\lambda, c)) + O_{s,r}(1) \leq
$$
$$
n_+(s; T_{j,3}^{\pm}(\lambda, c)) \leq
$$
\bel{452}
 \sum_{m=1}^{M_j^\pm} n_+(s(1-r); t_{j,m.1}^{\pm}(\lambda, c))  + O_{s,r}(1), \quad  \lambda \downarrow 0.
\ee
\end{pr}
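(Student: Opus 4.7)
The plan is to write $T_{j,3}^{\pm}(\lambda, c) = \bigoplus_{m=1}^{M_j^{\pm}} t_{j,m,1}^{\pm}(\lambda, c) + R_j^{\pm}(\lambda, c)$, where the self-adjoint remainder $R_j^{\pm}$ consists of (a) the off-diagonal blocks $(m,n)$ with $m \neq n$ of the matrix kernel \eqref{442}, and (b) the diagonal blocks with the periodic factors $\eta_{j,m}^{\pm}$ and $\zeta_{j,m}^{\pm}$ replaced by their mean-subtracted versions $\eta_{j,m}^{\pm} - \langle\eta_{j,m}^{\pm}\rangle$ and $\zeta_{j,m}^{\pm} - \langle\zeta_{j,m}^{\pm}\rangle$. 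Once the key estimate $n_{\pm}(s; R_j^{\pm}(\lambda,c)) = O_s(1)$ as $\lambda \downarrow 0$ is established for every fixed $s > 0$, both bounds in \eqref{452} follow from the Weyl inequalities \eqref{41} applied with $s_1 = s(1 \mp r)$ and $s_2 = sr$, using that $n_+(s; \bigoplus_m t_{j,m,1}^{\pm}) = \sum_m n_+(s; t_{j,m,1}^{\pm})$.

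The structural observation about $R_j^{\pm}$ is the following. Expanding each periodic factor in its Fourier series and using the product rule $\mathcal{F}(\epsilon h)(\xi) = (2\pi)^{-1/2}\sum_\ell h_\ell \hat{\epsilon}(\xi - \ell)$, every matrix entry of the kernel of $R_j^{\pm}$ takes the form $\tilde{a}_{j,m}(k;\lambda) K_{m,n}(k-k') \tilde{a}_{j,n}(k';\lambda)$, where $K_{m,n}$ is a linear combination of shifts of $\hat{\epsilon}$ and $\hat{\rho}_\nu$ whose arguments $k - k' + k_{j,m}^{\pm} - k_{j,n}^{\pm} - \ell$ stay bounded away from zero: in case (a) because $k_{j,m}^{\pm} \neq k_{j,n}^{\pm}$ in $\mathbb{T}^*$ (shrinking $\delta$ if necessary), in case (b) because $\ell \neq 0$ throughout the summation (the mean contribution having already been subtracted out). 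Using $\beta \in C^4$, the elliptic regularity of the band eigenfunctions $\psi_j^{\pm}$ on $\overline{{\mathcal I}_{j,m}^{\pm}}$, and Lemma \ref{l44} applied to $\epsilon, \rho_\nu \in \mathcal{S}_{4,\alpha}(\re)$, each $K_{m,n}$ is smooth and bounded on $[-2\delta,2\delta]$ uniformly in $\lambda$, and thus admits a Schwartz extension $\tilde{K}_{m,n}:\re \to \mathbb{C}$ independent of $\lambda$ that agrees with $K_{m,n}$ on $(-2\delta, 2\delta)$.

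To prove $n_{\pm}(s; R_j^{\pm}) = O_s(1)$, I view $R_j^{\pm}$ as the compression to $L^2((-\delta,\delta); \mathbb{C}^{M_j^{\pm}})$ of the operator $\tilde{R}_j^{\pm}$ on $L^2(\re; \mathbb{C}^{M_j^{\pm}})$ with matrix kernel $\tilde{a}_{j,m}(k;\lambda) \tilde{K}_{m,n}(k-k') \tilde{a}_{j,n}(k';\lambda)$, so that by the min--max principle $n_{\pm}(s; R_j^{\pm}) \leq n_{\pm}(s; \tilde{R}_j^{\pm})$. Conjugation by the Fourier transform on $L^2(\re; \mathbb{C}^{M_j^{\pm}})$ identifies $\tilde{R}_j^{\pm}$, up to a constant factor, with $\mathrm{diag}\{(-\mu_{j,m}^{\pm}\partial_x^2 + \lambda)^{-1/2}\} \,\mathcal{V}\, \mathrm{diag}\{(-\mu_{j,n}^{\pm}\partial_x^2 + \lambda)^{-1/2}\}$, where $\mathcal{V}$ is the matrix-valued Schwartz multiplier with entries $\sqrt{2\pi}\,\mathcal{F}^{-1}(\tilde{K}_{m,n})$. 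By the matrix-valued Birman--Schwinger principle (an immediate extension of Lemma \ref{l41} to block-diagonal positive operators), $n_{\pm}(s; \tilde{R}_j^{\pm})$ coincides with the number of eigenvalues below $-\lambda$ of the matrix Schr\"odinger operator $\mathrm{diag}\{-\mu_{j,m}^{\pm}\partial_x^2\} \mp \mathcal{V}/s$, which has only finitely many discrete eigenvalues by a matrix-valued analogue of Lemma \ref{l42}(iii) (applicable since $\mathcal{V}$ is Schwartz-class). This gives the $O_s(1)$ bound uniformly in $\lambda$.

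The main obstacle is the verification that $R_j^{\pm}$ has exactly the smooth-symbol structure described above, together with the matrix-valued CLR-type bound across the different effective masses $\mu_{j,m}^{\pm}$. In particular, one must track how the smoothness assumptions $\beta \in C^4$ and $\partial \omega \in C^2$ propagate through the defining integrals of $\eta_{j;m,n}^{\pm}$ and $\zeta_{j;m,n}^{\pm}$ to yield summable Fourier coefficients (so that the product rule above converges absolutely), and how the analyticity of $k \mapsto \psi_j^{\pm}(\cdot;k)$ on $\overline{{\mathcal I}_{j,m}^{\pm}}$ ensures the uniform-in-$\lambda$ smoothness of the extensions $\tilde{K}_{m,n}$ on all of $\re$.
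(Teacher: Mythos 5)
Your proposal is correct and follows essentially the same route as the paper: the same splitting of $T_{j,3}^{\pm}$ into the block-diagonal sum $\bigoplus_m t_{j,m,1}^{\pm}$ plus a remainder built from the off-diagonal blocks and the mean-subtracted diagonal parts, the same Weyl-inequality reduction to $n_\pm(s;R)=O_s(1)$, the same observation that the remainder's kernel involves only shifts $\hat\eps(\cdot-\ell+k_{j,m}^\pm-k_{j,n}^\pm)$, $\hat\rho_\nu(\cdot-\ell+k_{j,m}^\pm-k_{j,n}^\pm)$ evaluated away from the singularity at the origin, and the same extension to $L^2(\re;{\mathbb C}^{M_j^\pm})$ (the paper implements your ``extension of the symbol'' by multiplying the kernel by a cutoff $\Theta(k-k')$) followed by Birman--Schwinger and Lemma \ref{l42}(iii) for a matrix potential. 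The only caveat is that by Lemma \ref{l44} the shifted transforms are $C^{3}$ rather than smooth, so the extended symbol is not Schwartz but only yields $\|{\mathbb V}(x)\|=O((1+|x|)^{-3})$ --- which is still more than enough for Lemma \ref{l42}(iii), so your argument is unaffected.
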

\begin{proof}
Set
$T_{j,4}^{\pm}(\lambda,c) : = \bigoplus_{m=1}^{M_j^{\pm}} t_{j,m,1}^{\pm}(\lambda, c)$.
Then,
\bel{454}
n_+(s; T_{j,4}^{\pm}(\lambda,c)) = \sum_{m=1}^{M_j^\pm} n_+(s; t_{j,m,1}^{\pm}(\lambda, c)), \quad s>0, \quad \lambda >0, \quad c \in \re.
\ee
On the other hand, the Weyl inequalities imply that for $s > 0$ and $r \in (0,1)$ we have
$$
n_+(s(1+r); T_{j,4}^{\pm}(\lambda,c)) - n_-(sr; T_{j,3}^{\pm}(\lambda,c) - T_{j,4}^{\pm}(\lambda,c)) \leq
$$
$$
n_+(s; T_{j,3}^{\pm}(\lambda,c)) \leq
$$
    \bel{455}
    n_+(s(1-r); T_{j,4}^{\pm}(\lambda,c)) + n_+(sr; T_{j,3}^{\pm}(\lambda,c) - T_{j,4}^{\pm}(\lambda,c)).
\ee
Bearing in mind \eqref{454} -- \eqref{455}, we find that in order to prove \eqref{452}, it suffices to show that for each $s>0$ we have
    \bel{x28}
     n_{+}(s; T_{j,3}^{\pm}(\lambda,c) - T_{j,4}^{\pm}(\lambda,c)) = O_s(1),
     \ee
     \bel{x28a}
     n_{-}(s; T_{j,3}^{\pm}(\lambda,c) - T_{j,4}^{\pm}(\lambda,c)) = O_s(1),
     \ee
     as $\lambda \downarrow 0$. Note that $T_{j,3}^{\pm}(\lambda,c) - T_{j,4}^{\pm}(\lambda,c) : {\rm L}^2((-\delta,\delta); {\mathbb C}^{M_j^{\pm}}) \to {\rm L}^2((-\delta,\delta); {\mathbb C}^{M_j^{\pm}})$
     can be written as an operator with matrix-valued integral kernel
     $$
\sqrt{2\pi} \tilde{a}_{j,m}(k;\lambda)\left(\delta_{m,n} \left(\mathcal{F}{(\pm\eps (\eta_{j;m}^{\pm} - \langle \eta_{j;m}^{\pm}\rangle)  + c\rho_\nu (\zeta_{j;m}^{\pm} - \langle \zeta_{j;m}^{\pm}\rangle))}\right)(k-k') + \right.
$$
 \bel{456}
\left. (1-\delta_{m,n}) \left(\mathcal{F}{(\pm\eps (\eta_{j;m,n}^{\pm} + \overline{\eta^{\pm}_{j;n,m}}) + c\rho_\nu \zeta_{j;m,n}^{\pm})}\right)(k-k'+k_{j,m}^{\pm}-k_{j,n}^{\pm})\right)\tilde{a}_{j,n}(k';\lambda),
\ee
with $k, k' \in (-\delta, \delta)$ and  $m=1,\ldots,M_j^{\pm}$. Pick
$$
\varkappa < \frac{1}{2}  \inf_{\ell \in {\mathbb Z}, \, m\neq n}|\ell + k_{j,m}^{\pm}-k_{j,n}^{\pm}| \in (0,1/2),
$$
 and $\delta \in
(0,\varkappa)$. Let $\Theta \in C_0^{\infty}(\re)$ be an even real-valued function such that ${\rm supp}\,\Theta \subset [-2\varkappa,2\varkappa]$, $\Theta(k) = 1$ for every $k \in [-2\delta, 2\delta]$ and
$\Theta(k) \in [0,1]$ for every $k \in \re$. Then we can multiply by $\Theta(k-k')$ the entries of the integral kernel of the operator $T_{j,3}^{\pm}(\lambda,c) - T_{j,4}^{\pm}(\lambda,c)$, defined in \eqref{456}, leaving them invariant. Therefore, the quadratic form of the operator $T_{j,3}^{\pm}(\lambda,c) - T_{j,4}^{\pm}(\lambda,c)$ can be considered as the restriction on ${\rm L}^2((-\delta,\delta); {\mathbb C}^{M_j^{\pm}})$ of the quadratic form of the operator
$$
{\mathcal F}({\mathcal D}^2+\lambda)^{-1/2} {\mathbb V}({\mathcal D}^2+\lambda)^{-1/2}{\mathcal F}^*,
 $$
 compact and self-adjoint in ${\rm L}^2(\re; {\mathbb C}^{M_j^{\pm}})$. Here
$$
{\mathcal D}^2 = {\mathcal D}^2_{j,\pm} : = - {\mathcal M}_j^{\pm} \frac{d^2}{dx^2},
$$
${\mathcal M}_j^{\pm}$ is the constant diagonal matrix $\left\{\mu_{j,m}^{\pm} \delta_{m,n}\right\}_{n,m=1}^{M_j^{\pm}}$, ${\mathbb V}$ is a matrix-valued potential with entries
$$
{\mathbb V}_{m,n}(x) : = 2\pi  \int_\re e^{ikx} \left(\delta_{m,n}\sum_{\ell \in {\mathbb Z}, \, \ell \neq 0}\left({\mathcal F}(\pm\eta^{\pm}_{j, m;\ell} \eps + c \zeta^{\pm}_{j, m;\ell}\rho_\nu)\right)(k-\ell) + \right.
$$
$$
\left. (1-\delta_{m,n}) \sum_{\ell \in {\mathbb Z}} \left({\mathcal F}(\pm(\eta^{\pm}_{j,m,n;\ell} + \overline{\eta^{\pm}_{j,n,m;-\ell}}) \eps + c \zeta^{\pm}_{j, m,n;\ell}\rho_\nu)\right)(k-\ell+k_{j,m}^{\pm}-k_{j,n}^{\pm})\right)\Theta(k) dk,
$$
$x\in \re$, $n,m =1,\ldots,M_j^{\pm}$, and $\eta^{\pm}_{j, m;\ell}, \zeta^{\pm}_{j, m;\ell}, \eta^{\pm}_{j,m,n;\ell}, \zeta^{\pm}_{j,m,n;\ell}$,  are the Fourier coefficients with respect of the system
$(2\pi)^{-1/2} e^{i\ell x}$, $x \in {\mathbb T}$, $\ell \in {\mathbb Z}$, respectively of the functions $\eta^{\pm}_{j, m}, \zeta^{\pm}_{j, m}, \eta^{\pm}_{j,m,n}$, and $\zeta^{\pm}_{j,m,n}$.
Bearing in mind the unitarity of ${\mathcal F}$, and applying the mini-max principle, and the Birman--Schwinger principle, we get
\bel{457}
 n_+(s; T_{j,3}^{\pm}(\lambda,c) - T_{j,4}^{\pm}(\lambda,c)) \leq N_{(-\infty,-\lambda)}({\mathcal D}^2 - s^{-1}{\mathbb V}), \quad s>0.
 \ee
 Since the series of the Fourier coefficients of the functions $\eta^{\pm}_{j, m}, \zeta^{\pm}_{j, m}, \eta^{\pm}_{j,n,m}$, and $\zeta^{\pm}_{j,n,m}$, are absolutely convergent, while Lemma \ref{l44} implies that the functions $\hat{\eps}(\cdot-\ell+k_{j,n}^{\pm}-k_{j,m}^{\pm})$, $\hat{\rho}_\nu(\cdot-\ell+k_{j,n}^{\pm}-k_{j,m}^{\pm})$, $\ell \in {\mathbb Z}$, $m \neq n$, and $\hat{\eps}(\cdot-\ell)$, $\hat{\rho}_\nu(\cdot-\ell)$, $\ell \in {\mathbb Z}$, $\ell \neq 0$, together with their derivatives of order up to three, are uniformly bounded on ${\rm supp}\,\Theta$, we have
 $$
 \|{\mathbb V}(x)\| = O\left((1+|x|)^{-3}\right), \quad x \in \re.
 $$
 Now Lemma \ref{l42} (iii) easily implies that
    \bel{458}
    N_{(-\infty,-\lambda)}({\mathcal D}^2 - s^{-1}{\mathbb V}) = O(1), \quad \lambda \downarrow 0, \quad s>0.
    \ee
    Putting together \eqref{457} and \eqref{458}, we obtain \eqref{x28}. The proof of \eqref{x28a} is analogous, and reduces to the replacement of ${\mathbb V}$ by $-{\mathbb V}$.
\end{proof}
Further, the quadratic forms of the operators $t_{j,m,1}^{\pm}(\lambda,c)$  $m=1,\ldots, M_j^{\pm}$, can be considered as restrictions on $ {\rm L}^2(-\delta,\delta)$ of the quadratic forms of
$t_{j,m,2}^{\pm}(\lambda,c\zeta_m)$,  where the operators
$$
t_{j,m,2}^{\pm}(\lambda,c) : =
 {\mathcal F}\left(-\mu_{j,m}^{\pm} \frac{d^2}{dx^2} + \lambda\right)^{-1/2}(\pm 2 \pi \langle \eta_{j,m}^{\pm}\rangle \eps  + c \rho_\nu)\left(-\mu_{j,m}^{\pm} \frac{d^2}{dx^2} + \lambda\right)^{-1/2}{\mathcal F}^*
$$
are compact and self-adjoint in ${\rm L}^2(\re)$. Applying the mini-max principle,  we get
\bel{459}
n_+(s;t_{j,m,1}^{\pm}(\lambda,c)) \leq  n_+(s;t_{j,m,2}^{\pm}(\lambda,c_1))
\ee
with $c_1 = 2\pi |\langle \zeta_{j,m}^{\pm}\rangle|$. Let us establish the corresponding lower bound. Define $t_{j,m,3}^{\pm}(\lambda,c) : {\rm L}^2(\re) \to {\rm L}^2(\re)$ as the operator with integral kernel
$$
\tau(k,k';\lambda,c) \chi_{(-\delta,\delta)}(k) \chi_{(-\delta,\delta)}(k'), \quad k,k' \in \re,
$$
(see \eqref{460}). Evidently, the non-zero eigenvalues of the operators $t_{j,m,1}^{\pm}(\lambda,c)$ and $t_{j,m,3}^{\pm}(\lambda,c)$ coincide, and we have
    \bel{x29}
    n_+(s;t_{j,m,1}^{\pm}(\lambda,c)) = n_+(s;t_{j,m,3}^{\pm}(\lambda,c)), \lambda > 0, \quad c \in \re.
    \ee
    On the other hand, it is easy to see that for each $c \in \re$ there exist constants $c_1, c_2 > 0$ such that
    \bel{x30}
    t_{j,m,3}^{\pm}(\lambda,c) \geq t_{j,m,2}^{\pm}(\lambda,-c_1) - c_2 (t_{j,m,4}^{\pm})^* t_{j,m,4}^{\pm},
    \ee
    where $t_{j,m,4}^{\pm} : {\rm L}^2(\re) \to {\rm L}^2(\re)$ is an operator with integral kernel
    $$
    (1 + x^2)^{-\alpha(1-\nu)/4} e^{ikx} \chi_{\re \setminus (-\delta,\delta)}(k) |k|^{-1/2}, \quad x \in \re, \quad k \in \re.
    $$
    Estimate \eqref{x30}, the mini-max principle and the Weyl inequalities imply
    \bel{462}
   n_+(s;t_{j,m,3}^{\pm}(\lambda,c)) \geq n_+(s(1+r);t_{j,m,2}^{\pm}(\lambda,-c_1)) - n_*(\sqrt{sr/c_2}; t_{j,m,4}^{\pm}).
   \ee
   By \cite[Theorem 13, Section 8, Chapter 11]{BSo2}, the operator $t_{j,m,4}^{\pm}$ is compact. Hence,
   \bel{464}
   n_*(s; t_{j,m,4}^{\pm}) < \infty, \quad s>0.
   \ee
   Now, the combination of \eqref{x29}, \eqref{462}, and \eqref{464} imply
   \bel{465}
   n_+(s;t_{j,m,1}^{\pm}(\lambda,c)) \geq n_+(s(1+r);t_{j,m,2}^{\pm}(\lambda,-c_1)) + O_{s,r}(1), \quad \lambda \downarrow 0.
   \ee
   Finally, the Birman-Schwinger principle implies
\bel{x31}
n_+(s;t_{j,m,2}^{\pm}(\lambda,c)) = N_{(-\infty,-\lambda)}\left(-\mu_{j,m}^{\pm} \frac{d^2}{dx^2} -  s^{-1}(\pm 2\pi \langle \eta_{j,m}^{\pm}\rangle \eps  + c \rho_\nu )\right).
\ee
   Putting together \eqref{426}, \eqref{427}, \eqref{430}, \eqref{432}, \eqref{434}, \eqref{435}, \eqref{x32}, \eqref{452}, \eqref{459}, \eqref{465}, and \eqref{x31}, we find that under the  hypotheses of Theorem \ref{th1}, there exists a constant $c>0$ such that for each $s \in (0,1)$ and $\nu \in (0,1)$, we have
   $$
   \sum_{m=1}^{M_j^\pm} N_{(-\infty,-\lambda)}\left(-\mu_{j,m}^{\pm} \frac{d^2}{dx^2} -  (1+s)^{-1}(\pm 2\pi \langle \eta_{j,m}^{\pm}\rangle \eps  - c \rho_\nu )\right) +O_s(1) \leq
   $$
   $$
   {\mathcal N}_j^{\pm}(\lambda) \leq
   $$
   \bel{467}
   \sum_{m=1}^{M_j^\pm} N_{(-\infty,-\lambda)}\left(-\mu_{j,m}^{\pm} \frac{d^2}{dx^2} -  (1-s)^{-1}(\pm 2\pi \langle \eta_{j,m}^{\pm}\rangle \eps  + c \rho_\nu )\right) + O_s(1),
   \ee
   as $\lambda \downarrow 0$. Now the results of Theorem \ref{th1} follow from \eqref{467} and Lemma \ref{l42}. For the convenience of the reader, we add just a few hints concerning the details:
   \begin{itemize}
   \item First of all, note that since $\nu > 0$ we have $\rho_\nu(x) = o(\eps(x))$ as $|x| \to \infty$.
   \item If $\pm \langle \eta_{j,m}^{\pm} \rangle > 0$ for some $m =1,\ldots,M_j^{\pm}$, then \eqref{21} follows from Lemma \ref{l42} (i). Here, we should also take into account the limiting relation
   $$
   \lim_{s \to 0} \lim_{\lambda \downarrow 0} \frac{\int_\re(\pm (1+s)^{-1} 2\pi \langle \eta_{j,m}^{\pm} \rangle \eps(x) - \lambda)_+^{-1/2} dx} {\int_\re(\pm 2\pi \langle \eta_{j,m}^{\pm} \rangle \eps(x) - \lambda)_+^{-1/2} dx} = 1.
   $$
   \item If $\pm \langle \eta_{j,m}^{\pm} \rangle < 0$ for all $m =1,\ldots,M_j^{\pm}$, and $\eps \in {\mathcal S}_{4,\alpha}^+(\re)$, then the positive part of the function $\pm 2\pi \langle \eta_{j,m}^{\pm}\rangle \eps  + c \rho_\nu$  in \eqref{467} has a compact support since $\nu > 0$. Therefore, in this case \eqref{22} follows from Lemma \ref{l42} (iii).
   \item If $\langle \eta_{j,m}^{\pm} \rangle = 0$ for some $m =1,\ldots,M_j^{\pm}$, then the only non-zero term of the potential in \eqref{467} is proportional to $\rho_\nu$. If $\alpha > 1$ then we can pick $\nu \in (0,1)$ so that $\alpha(1+\nu) > 2$, and in this case \eqref{22} follows again from Lemma \ref{l42} (iii). If $\alpha \in (0,1]$, then \eqref{x2} follows from Lemma \ref{l42} (i) and the fact that $(1+\nu)\alpha$ could be chosen arbitrarily close, but yet smaller than $2\alpha$.
       \item If $\alpha = 2$, Theorem \ref{th1} (iii) follows  from Lemma \ref{l42} (ii).
   \item Finally, if $\alpha > 2$ (and, hence, $\alpha(1+\nu) > 2$), then Theorem \ref{th1} (iv) follows immediately from Lemma \ref{l42} (iii).

   \end{itemize}

\appendix
\section{Proof of Proposition \ref{p21}}
\label{app} \setcounter{equation}{0}
Assume the hypotheses of Proposition \ref{p21} (i) - (iii). By the Birman--Schwinger principle
    \bel{x37}
    N_{(-\infty,-\lambda)}(h_{\rm eff}) = n_+(1; a(\lambda) {\mathcal F} \eta \eps {\mathcal F}^* a(\lambda)), \quad \lambda > 0,
    \ee
    where $a(k;\lambda) : = (\mu k^2 + \lambda)^{-1/2}$, $k \in \re$, $\lambda \geq 0$. Denote by $\chi_1$ the characteristic function of the interval $(-\delta,\delta)$ with $\delta \in (0,1/2)$. Set $\chi_2 : = 1 - \chi_1$ and write
    $$
    a(\lambda) {\mathcal F} \eta \eps {\mathcal F}^* a(\lambda) =
    $$
    \bel{x38}
   \langle \eta \rangle a(\lambda) {\mathcal F}  \eps {\mathcal F}^* a(\lambda) + \sum_{j=1,2}\,a(\lambda)\chi_j {\mathcal F}( \eta -  \langle \eta \rangle)\eps {\mathcal F}^* \chi_j a(\lambda) + 2{\rm Re}\,a(\lambda)\chi_1 {\mathcal F}( \eta -  \langle \eta \rangle)\eps {\mathcal F}^* \chi_2 a(\lambda).
   \ee
   Further, for any $u \in {\rm L}^2(\re)$,
   $$
   \left( \left(2{\rm Re}\,a(\lambda)\chi_1 {\mathcal F}( \eta -  \langle \eta \rangle)\eps {\mathcal F}^* \chi_2 a(\lambda)\right)u, u\right)_{{\rm L}^2(\re)} = 2 {\rm Re} \left( f,g\right)_{{\rm L}^2(\re)}
   $$
   where $\left(\cdot, \cdot\right)_{{\rm L}^2(\re)}$ is the scalar product in ${\rm L}^2(\re)$, and
   $$
   f :  = \rho_\nu^{1/2} {\mathcal F}^* \chi_1 a(\lambda)u, \quad g: = \rho_{-\nu}^{1/2} ( \eta -  \langle \eta \rangle)\eps \rho_0^{-1}{\mathcal F}^* \chi_2 a(\lambda)u, \quad \nu \in (0,1),
   $$
   the multiplier $\rho_\nu$, $\nu \in (-1,1)$, being defined in \eqref{x50}.
   Evidently, since $\rho_\nu(x) \leq \rho_{-\nu}(x)$, $x \in \re$, $\nu \in (0,1)$, we have
   $$
   -\|\rho_\nu^{1/2} {\mathcal F}^*  a(\lambda)u\|^2 - (1+2C^2) \|\rho_{-\nu}^{1/2} {\mathcal F}^* \chi_2 a(\lambda)u\|^2 \leq -\frac{1}{2}\|f\|^2 - 2 \|g\|^2 \leq
   $$
   $$
   2 {\rm Re} \left( f,g\right)_{{\rm L}^2(\re)} \leq
   $$
   $$
   \frac{1}{2}\|f\|^2 + 2\|g\|^2 \leq \|\rho_\nu^{1/2} {\mathcal F}^*  a(\lambda)u\|^2 + (1+2C^2) \|\rho_{-\nu}^{1/2} {\mathcal F}^* \chi_2 a(\lambda)u\|^2
   $$
   with
    $$
    C : = \sup_{x \in \re}|\eta(x) -  \langle \eta \rangle| \rho_0(x)^{-1}| \eps(x)|.
    $$
     Therefore,
    $$
   - a(\lambda) {\mathcal F} \rho_\nu {\mathcal F}^*  a(\lambda) - (1+2C^2) a(\lambda)\chi_2 {\mathcal F} \rho_{-\nu} {\mathcal F}^* \chi_2 a(\lambda) \leq $$
   $$
2{\rm Re}\,a(\lambda)\chi_1 {\mathcal F}( \eta -  \langle \eta \rangle)\eps {\mathcal F}^* \chi_2 a(\lambda) \leq
$$
    \bel{x39}
    a(\lambda) {\mathcal F} \rho_\nu {\mathcal F}^*  a(\lambda) + (1+2C^2) a(\lambda)\chi_2 {\mathcal F} \rho_{-\nu} {\mathcal F}^* \chi_2 a(\lambda),  \quad \nu \in (0,1).
    \ee
    Similarly,
    $$
   -C a(\lambda)\chi_2 {\mathcal F}\rho_{-\nu} {\mathcal F}^*\chi_2  a(\lambda) \leq  -C a(\lambda)\chi_2 {\mathcal F}\rho_0 {\mathcal F}^*\chi_2  a(\lambda) \leq $$
    $$
    a(\lambda)\chi_2 {\mathcal F}( \eta -  \langle \eta \rangle) \eps {\mathcal F}^*\chi_2  a(\lambda) \leq
    $$
    \bel{dec1}
    C a(\lambda)\chi_2 {\mathcal F}\rho_0 {\mathcal F}^*\chi_2  a(\lambda) \leq C a(\lambda)\chi_2 {\mathcal F}\rho_{-\nu} {\mathcal F}^*\chi_2  a(\lambda), \quad \nu \in (0,1).
    \ee
    Now it follows from \eqref{x38} -- \eqref{dec1} that
$$
a(\lambda) {\mathcal F}(\langle \eta \rangle \eps - \rho_\nu) {\mathcal F}^*  a(\lambda) + a(\lambda)\chi_1 {\mathcal F}(\eta - \langle \eta \rangle)  \eps {\mathcal F}^* \chi_1 a(\lambda) -
$$
$$
(1 + C + 2C^2)a(\lambda)\chi_2 {\mathcal F} \rho_{-\nu} {\mathcal F}^* \chi_2 a(\lambda) \leq
$$
$$
a(\lambda) {\mathcal F} \eta \eps {\mathcal F}^* a(\lambda) \leq
$$
    $$
    a(\lambda) {\mathcal F}(\langle \eta \rangle \eps + \rho_\nu) {\mathcal F}^*  a(\lambda) + a(\lambda) {\mathcal F}(\eta - \langle \eta \rangle)  \eps {\mathcal F}^* \chi_1 a(\lambda) +
    $$
    \bel{x40}
    (1+C +2 C^2)a(\lambda)\chi_2 {\mathcal F} \rho_{-\nu} {\mathcal F}^* \chi_2 a(\lambda).
    \ee
    Applying the mini-max principle and the Weyl inequalities,  
    we find that \eqref{x40} implies
     $$
     n_+(1+s; a(\lambda) {\mathcal F}(\langle \eta \rangle \eps - \rho_\nu) {\mathcal F}^*  a(\lambda))
     - n_-(s/2; a(\lambda)\chi_1 {\mathcal F}(\eta - \langle \eta \rangle)  \eps {\mathcal F}^* \chi_1 a(\lambda)) -
     $$
     $$
     n_*(\sqrt{s/(2(1 + C+2C^2))}; \rho_{-\nu}^{1/2} {\mathcal F}^* \chi_2 a(0)) \leq
     $$
     $$
     n_+(1; a(\lambda) {\mathcal F} \eta \eps {\mathcal F}^* a(\lambda)) \leq
     $$
     $$
     n_+(1-s; a(\lambda) {\mathcal F}(\langle \eta \rangle \eps + \rho_\nu) {\mathcal F}^*  a(\lambda))
     + n_+(s/2; a(\lambda)\chi_1 {\mathcal F}(\eta - \langle \eta \rangle)  \eps {\mathcal F}^* \chi_1 a(\lambda)) +
     $$
     \bel{x41}
     n_*(\sqrt{s/(2(1+C+2C^2))}; \rho_{-\nu}^{1/2} {\mathcal F}^* \chi_2 a(0)), \quad s \in (0,1), 
     \ee 
     bearing in mind that
    $a(k;\lambda) \leq a(k;0)$ for $k \in {\rm supp}\,\chi_2$ and $\lambda \geq 0$. \\
     The operator $a(\lambda)\chi_1 {\mathcal F}(\eta - \langle \eta \rangle)  \eps {\mathcal F}^* \chi_1 a(\lambda)$ admits the integral kernel
     \bel{x42}
     (2\pi)^{-1} a(k;\lambda)\chi_1(k) \sum_{\ell \in {\mathbb Z}\setminus\{0\}}\eta_\ell  \hat{\eps}(k-k'-\ell) \chi_1(k') a(k';\lambda), \quad k,k' \in \re.
     \ee
     Let $\varkappa \in (\delta,1/2)$, and let $\Theta \in C_0^\infty(\re)$ with ${\rm supp}\,\Theta = [-2\varkappa, 2\varkappa]$ and
     ${\rm supp}\,(1-\Theta) \subset \re \setminus (-2\delta, 2\delta)$, be the real even function used in the proof of Proposition \ref{p452}. We can multiply the integral kernel in \eqref{x42} by $\Theta(k-k')$ without modifying it. Hence, by the mini-max principle and the Birman--Schwinger principle, we have
     $$
     n_\pm(s; a(\lambda)\chi_1 {\mathcal F}(\eta - \langle \eta \rangle)  \eps {\mathcal F}^* \chi_1 a(\lambda)) \leq
     $$
     $$
     n_\pm(s; (2\pi)^{-1/2} a(\lambda) {\mathcal F}(((\eta - \langle \eta \rangle)\eps)*\hat{\Theta}) {\mathcal F}^*  a(\lambda)) =
     $$
     \bel{x43}
     N_{(-\infty,-\lambda)}\left(-\mu \frac{d^2}{dx^2} \mp s^{-1} (2\pi)^{-1/2}(((\eta - \langle \eta \rangle)\eps)*\hat{\Theta})\right), \quad s>0, \quad \lambda >0.
     \ee
     Arguing as in the proof of Proposition \ref{p452}, we find with the help of Lemma \ref{l44} that
     \bel{x44}
     \left|(((\eta - \langle \eta \rangle)\eps)*\hat{\Theta})(x)\right| = O((1+|x|)^{-3}), \quad x \in \re.
     \ee
     Estimates \eqref{x43} -- \eqref{x44} combined with Lemma \ref{l42} (iii), imply
     \bel{x45}
     n_\pm(s; a(\lambda)\chi_1 {\mathcal F}(\eta - \langle \eta \rangle)  \eps {\mathcal F}^* \chi_1 a(\lambda)) = O_s(1), \quad \lambda \downarrow 0, \quad s>0. \ee
     Finally, the operator $\rho_{-\nu}^{1/2} {\mathcal F}^* \chi_2 a(0)$ with $\nu < 1$ is compact by \cite[Theorem 13, Section 8, Chapter 11]{BSo2}. Therefore, \bel{x46}
     n_*(s;\rho_{-\nu}^{1/2} {\mathcal F}^* \chi_2 a(0)) < \infty, \quad s>0.
     \ee
     Putting together \eqref{x41}, \eqref{x45}, and \eqref{x46}, and applying the Birman-Schwinger principle, we obtain
     $$
     N_{(-\infty,-\lambda)}\left(-\mu \frac{d^2}{dx^2} - (1+s)^{-1} (\langle \eta \rangle\eps - \rho_\nu)\right) + O_s(1) \leq
     $$
     $$
     n_+(1; a(\lambda) {\mathcal F} \eta \eps {\mathcal F}^* a(\lambda)) \leq
     $$
     \bel{x47}
      N_{(-\infty,-\lambda)}\left(-\mu \frac{d^2}{dx^2} - (1-s)^{-1} (\langle \eta \rangle\eps + \rho_\nu)\right) + O_s(1), \quad \lambda \downarrow 0,
      \ee
      for any $s \in (0,1)$ and $\nu \in (0,1)$. Now parts (i) - (iii) of Proposition \ref{p21} follow from estimates \eqref{x37} and \eqref{x47}, and Lemma \ref{l42}. Part (iv) of this proposition is implied directly by Lemma \ref{l42} (iii). \\

{\bf Acknowledgements}. The author thanks Hynek  Kova\v{r}\'{\i}k for several valuable remarks, and the anonymous referees for  the suggestions which contributed to the improvement of the exposition. The partial
support of the Chilean Scientific Foundation {\em Fondecyt}
under Grant 1130591, and of {\em N\'ucleo Milenio de F\'isica Matem\'atica} RC120002, is gratefully acknowledged.


{\sc G. Raikov}\\
Facultad de Matem\'aticas\\
Pontificia Universidad Cat\'olica de Chile\\
Av. Vicu\~na Mackenna 4860\\ Santiago de Chile\\
E-mail: graikov@mat.puc.cl\\

\end{document}